\newcommand{\MATLAB}{\textsc{Matlab}}
\newcommand{\levy}{L\'{e}vy}
\newcommand{\package}[1]{\texttt{#1}}
\newcommand{\R}{\mathbb{R}}
\newcommand{\differential}{\, \mathrm{d}}
\DeclarePairedDelimiter{\abs}{\lvert}{\rvert}
\DeclarePairedDelimiter{\norm}{\lVert}{\rVert}
\DeclarePairedDelimiterX{\inner}[2]{\langle}{\rangle}{#1,#2}
\DeclarePairedDelimiterXPP{\Normal}[2]{\mathcal{N}}{\lparen}{\rparen}{}{#1,\,#2}
\DeclarePairedDelimiterX{\Set}[1]{\lbrace}{\rbrace}{#1}
\DeclarePairedDelimiterXPP{\Expect}[1]{\mathbb{E}}{\lbrack}{\rbrack}{}{#1}
\DeclareMathOperator{\vecop}{vec}
\DeclareMathOperator{\matop}{mat}
\newcommand{\Prob}{\operatorname{P}}
\newcommand{\Erw}{\mathrm{E}}
\newcommand{\Span}{\operatorname{span}}
\newcommand{\Diag}{\operatorname{\mathrm{diag}}}
\newcommand{\Oo}{\operatorname{\mathcal{O}}}
\newcommand{\Skew}[1]{\mathrm{Skew}_{#1}}
\newcommand{\maxL}{{\ensuremath{\max,L^2}}}
\newcommand{\LF}{{\ensuremath{L^2,\mathrm{F}}}}
\newcommand{\FL}{{\ensuremath{\mathrm{F},L^2}}}
\newcommand{\IterInt}{\mathcal{I}}
\newcommand{\LevyArea}{A}
\newcommand{\app}[1]{\hat#1} %
\newcommand{\Se}{p}		%
\newcommand{\pp}{{(\Se)}}
\newcommand{\trunc}[1]{#1^{\pp}} %
\newcommand{\FS}{\mathrm{FS}}
\newcommand{\Mil}{\mathrm{Mil}}
\newcommand{\Wik}{\mathrm{Wik}}
\newcommand{\MR}{\mathrm{MR}}
\newcommand{\Alg}{\mathrm{Alg}}
\newcommand{\LAFS}{\app{\LevyArea}^{\FS,\pp}}
\newcommand{\LAMil}{\app{\LevyArea}^{\Mil,\pp}}
\newcommand{\LAWik}{\app{\LevyArea}^{\Wik,\pp}}
\newcommand{\LAMR}{\app{\LevyArea}^{\MR,\pp}}
\newcommand{\SFS}{{S}^{\FS,\pp}}
\newcommand{\SMil}{{S}^{\Mil,\pp}}
\newcommand{\SWik}{{S}^{\Wik,\pp}}
\newcommand{\SMR}{{S}^{\MR,\pp}}
\newcommand{\mref}{\mathrm{ref}}
\newtheorem{theorem}{Theorem}
\newtheorem{lemma}[theorem]{Lemma}
\newcommand{\orcidicon}[1]{\href{https://orcid.org/#1}{\includegraphics[height=\fontcharht\font`\B]{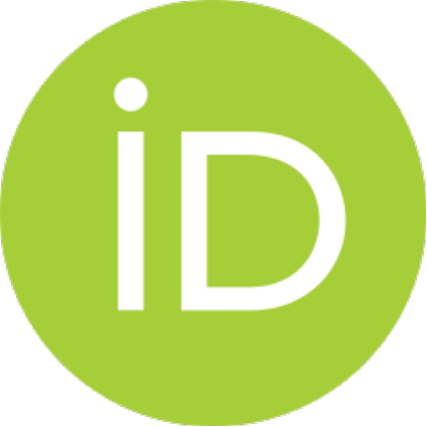}}}
\providecommand{\keywords}[1]{\providecommand{\and}{\textbullet}\textbf{\textit{Keywords ---}} #1}
\begin{document}
\title{An Analysis of Approximation Algorithms for Iterated Stochastic Integrals
     and a Julia and \MATLAB{} Simulation Toolbox}
\author{Felix Kastner\thanks{Funding and support by the Graduate School for Computing in Medicine and Life Sciences funded by Germany's Excellence Initiative [DFG GSC 235/2] is gratefully acknowledged.} \thanks{e-mail: kastner@math.uni-luebeck.de}\ \orcidicon{0000-0003-0504-6211}\ and Andreas Rößler\thanks{e-mail: roessler@math.uni-luebeck.de}\ \orcidicon{0000-0002-2740-2697}
    \medskip\\
    \small{Institute of Mathematics, Universität zu Lübeck,} \\[-1mm]
    \small{Ratzeburger Allee 160, 23562 Lübeck, Germany} %
}
\date{January 20, 2023}
\maketitle
\begin{abstract}
For the approximation and simulation of twofold iterated
stochastic integrals and the corresponding L\'{e}vy areas
w.r.t.\ a multi-dimensional Wiener process, we review four algorithms
based on a Fourier series approach. Especially, the
very efficient algorithm due to Wiktorsson and a newly
proposed algorithm due to Mrongowius and R\"o{\ss}ler are considered.
To put recent advances into context, we analyse the four Fourier-based
algorithms in a unified framework to highlight differences and similarities
in their derivation. A comparison of theoretical properties is complemented
by a numerical simulation that reveals the order of convergence for each
algorithm. Further, concrete instructions for the choice of the optimal
algorithm and parameters for the simulation of solutions for
stochastic (partial) differential equations are given.
Additionally, we provide advice for an efficient implementation of
the considered algorithms and incorporated these insights into an open
source toolbox that is freely available for both Julia and \MATLAB{}
programming languages. The performance of this toolbox is analysed by
comparing it to some existing implementations, where we observe a
significant speed-up.

\keywords{iterated stochastic integral \and\ L\'{e}vy area \and\ stochastic simulation \and\ Julia \and\ \MATLAB{} \and\ software toolbox \and\ stochastic differential equation \and\ stochastic partial differential equation}
\end{abstract}
\newpage
\section{Introduction}
\label{sec:intro}
\begin{quote}
    In the future, we foresee that the use of area integrals when simulating strong solutions to SDEs will become as automatic as the use of random numbers from a normal distribution is today. After all, once a good routine has been developed and implemented in numerical libraries, the ordinary user will only need to call this routine from each program and will not need to be concerned with the details of how the routine works.

    \hfill --- J. G. Gaines and T. J. Lyons (1994) \cite{MR1284705}
\end{quote}

Even though Wiktorsson published an efficient algorithm for the simulation of iterated integrals in 2001, this prediction by Gaines and Lyons has still, almost thirty years later, not come true yet.
Among the twenty SDE simulation packages we looked at, only four have implemented Wiktorsson's algorithm.
This suggests that either Wiktorsson's paper is not as well known as it should be or most authors of SDE solvers are not willing to dive into the intricacies of L\'{e}vy area simulation.
This paper is an attempt to fix this.

In many applications where stochastic (ordinary) differential equations
(SDEs) or stochastic partial differential equations (SPDEs)
are used as a mathematical model
the exact solution of these equations can not be calculated explicitly.
Therefore, numerical approximations have to be computed in these cases. Often
numerical integrators solely based on increments of the driving Wiener process or
$Q$-Wiener process like Euler-type schemes are applied because
they can be easily implemented. In general, these schemes attain only low
orders of strong convergence, see, e.g., \cite{MR1214374,MR1335454}.
This is because the best strong convergence rate we can expect to get is
in general at most~$1/2$ if only finitely many observations of the driving
Wiener process are used by a numerical scheme, see, e.g., Clark and
Cameron \cite{MR609181} and Dickinson \cite{MR2352954}.
One exception to this is the case when the diffusion
satisfies a so-called commutativity condition
\cite{MR1214374,MR3842926,MR1335454,MR2669396}.
However, in many applications this commutativity condition is not satisfied.

In order to obtain higher order numerical methods for SDEs and SPDEs in general, one needs to incorporate iterated stochastic integrals as they naturally appear in stochastic Taylor expansions of the solution process.
Since the distribution of these iterated stochastic
integrals is not known explicitly in case of a multi-dimensional driving
Wiener process, efficient algorithms for their approximation
are needed to achieve some higher order of strong convergence
compared to Euler-type schemes.
Thus, in general, to approximate or to simulate solutions for SDEs or SPDEs efficiently the use of iterated stochastic integrals is essential.

Up to now, there exist only a few approaches for the approximation
of iterated stochastic integrals and the corresponding L\'{e}vy areas
in $L^2$-norm or in the strong sense.
After \levy\ defined the eponymous stochastic area integral in
\cite{MR0044774}, the first approximation algorithms appeared in
the 80s \cite{Liske1982,MR1335454}.
These are based on a Fourier series expansion of the Wiener process
and have an error of order $O(c^{-1/2})$ with
$c$ denoting the computational cost, considered as the number
of random variables used.
The first higher order algorithm was presented by Ryd\'{e}n and
Wiktorsson in \cite{MR1807367}.
Here, the method~B' from that paper has an error of order $O(c^{-1})$
but is only applicable to two-dimensional Wiener processes.
In the same year, Wiktorsson published his groundbreaking algorithm
of order $O(c^{-1})$ which also works for higher-dimensional Wiener
processes \cite{MR1843055} but has a worse error constant.
Twenty years later, Mrongowius and R\"o{\ss}ler~\cite{Mrongowius2021} recently proposed a variation of Wiktorsson's algorithm that has an improved error constant and is easier to implement.
It turns out that in the two-dimensional setting it even beats the method~B', that is, the error constant is by a factor~$1/{\sqrt{2}}$ smaller compared to method~B' for the same amount of work.

These higher order algorithms allow to effectively use strong order~$1$ schemes for
SDEs and SPDEs with non-commutative noise.
Taking into account the computational effort of SDE and SPDE integrators, one
obtains the so-called effective order of convergence by considering errors versus
computational cost, see also \cite{MR3842926,MR2669396}.
For example, combining a strong order~$1$ SDE integrator, like the Milstein scheme
or a stochastic Runge--Kutta scheme proposed in~\cite{MR2669396}, with a \levy\
area algorithm of order $O(c^{-1/2})$ achieves an effective order of convergence
that is only~$1/2$.
This is the same as the effective order of convergence for the classical
Euler--Maruyama scheme.
But, using instead a higher order \levy\ area algorithm of order $O(c^{-1})$
results in a significantly higher effective order of convergence that is~$2/3$.
Similar improvements can be achieved for SPDEs, see also
\cite{vonHallern2020a,vonHallern2020,MR2669396,MR1843055} for a detailed
discussion.

The algorithms that we focus on in the present paper are based on a trigonometric Fourier series expansion of the corresponding Brownian bridge process.
To be precise, we restrict our considerations to the naive truncated Fourier series approximation, the improved truncated Fourier series algorithm proposed by Milstein~\cite{MR1335454}, see also \cite{MR1178485,MR1214374}, and the higher order variants by Wiktorsson~\cite{MR1843055} and Mrongowius and Rößler~\cite{Mrongowius2021}.
As we will see, the latter two incorporate an additional approximation of the truncated terms to achieve an error of order $O(c^{-1})$.

Note that there also exist further variants of Fourier series based algorithms using different bases \cite{Liske1982,Kuznetsov2018,Kuznetsov2019,Foster2022}.
These generally achieve the same convergence order as the Milstein algorithm but with a slightly worse constant.
This matches Dickinson's result, that the Milstein algorithm is asymptotically optimal in the class of algorithms that only use linear information about the underlying Wiener process \cite{MR2352954}.
There is also work extending some of the mentioned algorithms to the infinite dimensional Hilbert space setting important for SPDEs \cite{MR3949104}.

Next to these Fourier series based algorithms, there exist also different approaches.
See, e.g., \cite{MR1284705,MR1807367,Malham2014,Stump2005} for simulation
algorithms or \cite{Davie2014,Flint2015} for approximation in a Wasserstein metric.
However, to the best of our knowledge, these algorithms either lack a $L^2$-error
estimate or come with additional assumptions on the target SDE and are thus not
suitable for the general strong approximation of SDEs.
Moreover, for the approximation of L\'{e}vy areas driven by fractional Brownian
motion, we refer to \cite{Neuenkirch2016,Neuenkirch2010}.

The aims of this paper are twofold.
On the one hand, we give an introduction to the different Fourier series based
algorithms under consideration and emphasize their similarities and differences.
A special focus lies on the analysis of the computational complexity for these
algorithms that is important in order to finally justify their application for SDE or
SPDE approximation and simulation problems.
On the other hand, we provide an efficient implementation of these algorithms.
This is essential for a reasonable application in the first place.
The result is a new Julia and \MATLAB{} software toolbox for the simulation of
twofold iterated stochastic integrals and their L\'{e}vy areas, see
\cite{levyarea-jl-zenodo,levyarea-m-zenodo}.
These two packages make it feasible to use higher order approximation schemes,
like Milstein or stochastic Runge--Kutta schemes, for non-commutative SDEs
and SPDEs.

The paper is organized as follows:
In Section~\ref{sec:theory} we give a brief introduction to the theoretical background
of iterated stochastic integrals with the corresponding L\'{e}vy areas and the
Fourier series expansion which builds the basis for the approximation algorithms
under consideration.
Based on this preliminary section, we detail the derivation of the four algorithms
under consideration in Section~\ref{sec:algorithms}.
Besides the theoretical foundations, we focus on the efficient implementation of
the four algorithms.
In Section~\ref{sec:error-estimates}, error estimates for each algorithm are
gathered for direct comparison.
Here, it is worth noting that some slightly improved result has been found for
Wiktorsson's algorithm.
Furthermore, we review the application of these algorithms to the numerical
simulation of SDEs.
Continuing in Section~\ref{sec:theo-comp}, we give a detailed analysis of the
computational complexity and determine the optimal algorithm for a range of
parameters.
In addition, a numerical simulation confirms the theoretical orders of convergence
from the previous section.
The paper closes in Section~\ref{sec:software} with the description of the newly
developed software toolbox and a run-time benchmark against currently available
implementations.

\section{Theoretical foundations for the simulation of iterated stochastic integrals}
\label{sec:theory}
Here, we give a brief introduction to twice iterated stochastic
integrals in terms of Wiener processes (also called Brownian motions)
and their relationship to L\'{e}vy areas. In addition, the
infinite dimensional case of a $Q$-Wiener process is briefly mentioned as well.
Based on these fundamentals, the well known Fourier series expansion of
the Brownian bridge process is presented, which builds the basis for
all algorithms in this paper.
\subsection{Iterated stochastic integrals and the L\'{e}vy area}
\label{subsec:iterint-levyarea}
Let $(\Omega, \mathcal{F}, \Prob)$ be a complete probability space and
let $(W_t)_{t \in [0,T]}$ be an $m$-dimensional Wiener process for some
$0<T<\infty$ with $(W_t)_{t \in [0,T]} = ((W_t^1, \ldots,
W_t^m)^\top)_{t \in [0,T]}$, i.e.,
the components $(W_t^i)_{t \in [0,T]}$, $ i=1,\ldots,m $, are independent scalar Wiener processes.
Further, let $\| \cdot \|$ denote the Euclidean norm
if not stated otherwise and let $\| X \|_{L^2(\Omega)} = \Erw\big[ \| X \|^2\big]^{1/2}$
for any $X \in L^2(\Omega)$ in the following.
We are interested in simulating the increments
$\Delta W_{t_0,t_0+h}^i = W_{t_0+h}^i-W_{t_0}^i$ and
$\Delta W_{t_0,t_0+h}^j =  W_{t_0+h}^j-W_{t_0}^j$ of the Wiener process
together with the twice iterated stochastic integrals
\begin{equation} \label{eq:iteratedintegrals-t0-h}
    \IterInt_{(i,j)}(t_0,t_0+h)
    = \int_{t_0}^{t_0+h} \!\! \int_{t_0}^{s} \differential W_r^i\differential W_s^j
\end{equation}
for some $0 \leq t_0< t_0+h \leq T$ and $1 \leq i, j \leq m$.
Note that for the Wiener process $(\hat{W}_t)_{t \in [0,h]}$ with
$\hat{W}_t = W_{t_0+t} - W_{t_0}$, see \cite[Ch. 2, Lem. 9.4]{MR1121940}, it
holds that
\begin{equation} \label{eq:iteratedintegrals-0-h}
    \int_{0}^{h} \!\! \int_{0}^{s} \differential\hat{W}_r^i \differential \hat{W}_s^j
    = \int_{t_0}^{t_0+h} \!\! \int_{t_0}^{s} \differential W_r^i \differential W_s^j
\end{equation}
for $1 \leq i,j \leq m$.
Moreover, due to the time-change formula for stochastic integrals
\cite[Ch. 3, Prop. 4.8]{MR1121940} one can show that for the
scaled Wiener process $( \tilde{W}_t )_{t \in [0,1]}$ with $\tilde{W}_t
= \frac{1}{\sqrt{h}} \hat{W}_{h t}$, see \cite[Ch. 2, Lem. 9.4]{MR1121940}, it holds
\begin{equation} \label{eq:iteratedintegrals-time-scaling}
    h  \int_{0}^{1}\!\!\int_{0}^{s}\differential\tilde{W}_r^i\differential\tilde{W}_s^j
    = \int_{0}^{h}\!\!\int_{0}^{s}\differential \hat{W}_r^i\differential \hat{W}_s^j \, .
\end{equation}
As a result of this, without loss of generality we restrict our considerations to
the case $t_0=0$ in the following and denote
\begin{equation}
	\IterInt_{(i,j)}(h) = \int_{0}^{h} \!\! \int_{0}^{s} \differential {W}_r^i
	\differential {W}_s^j \, .
\end{equation}
Further, let $\IterInt(h) = ( \IterInt_{(i,j)}(h) )_{1 \leq i,j \leq m}$ be
the $m \times m$-matrix containing all iterated stochastic integrals.

In some special cases, one may circumvent the simulation of the iterated
stochastic integrals by making use of the relationship
\begin{align} \label{eqn:commu-noise-relation}
	\frac{1}{2} \big( \IterInt_{(i,j)}(h) + \IterInt_{(j,i)}(h) \big)
	= \frac{1}{2} \, W_{h}^i \, W_{h}^j
\end{align}
for $1 \leq i < j \leq m$, see, e.g., \cite{MR1214374}, where the left hand side
represents the symmetric part
of the iterated stochastic integrals that can be expressed by the corresponding
increments of the Wiener process.
The right hand side of \eqref{eqn:commu-noise-relation} can be easily
simulated since the random variables $W_{h}^i \sim \Normal{0}{h}$
are i.i.d.\ distributed for $1 \leq i \leq m$.
In case $i=j$ we can calculate explicitly that
\begin{equation} \label{eqn-I-ii-formula}
	\IterInt_{(i,i)}(h) = \frac{1}{2} \big( ( W_{h}^i )^2 - h \big)
\end{equation}
which follows from the It{\^o} formula, see, e.g., \cite{MR1121940}.

The problem to simulate realizations of iterated stochastic
integrals is directly related to the simulation of the corresponding so-called
L\'{e}vy area \cite{MR0044774} $\LevyArea_{(i,j)}(h)$ defined as the skew-symmetric part of the iterated integrals
\begin{equation}
	\LevyArea_{(i,j)}(h) = \frac{1}{2} \big( \IterInt_{(i,j)}(h) - \IterInt_{(j,i)}(h) \big)
\end{equation}
for $1 \leq i,j \leq m$. We denote by $\LevyArea(h) = (\LevyArea_{(i,j)}(h))_{1 \leq i,j \leq m}$ the $m \times m$ matrix of all L\'{e}vy areas.
Thus it holds that $ \LevyArea(h) = -\LevyArea(h)^\top $ and $ \LevyArea_{(i,i)} = 0 $ for all $ 1\leq i\leq m $.
Due to \eqref{eqn:commu-noise-relation}, it follows that
$\mathcal{I}_{(i,j)}(h) = \frac{1}{2} \, W_{h}^i \, W_{h}^j + \LevyArea_{(i,j)}(h)$
for $i \neq j$. Now, the difficult part is to simulate
$\mathcal{I}_{(i,j)}(h)$ for $i \neq j$ because the distribution of the
corresponding L\'{e}vy area $\LevyArea_{(i,j)}(h)$ is not known.

For the infinite dimensional setting as it is the case for SPDEs of evolutionary type,
we follow the approach in \cite{MR3949104}. Therefore, we consider a
$U$-valued and in general infinite dimensional $Q$-Wiener process
$(W_t^Q)_{t \in [0,T]}$
taking values in some separable real Hilbert space $U$. Let $(\eta_i)_{i \in
\mathbb{N}}$ denote the eigenvalues of the trace class, non-negative and symmetric covariance operator
$Q \in L(U)$ w.r.t.\ an orthonormal basis (ONB) of eigenfunctions
$(e_i)_{i \in \mathbb{N}}$ such that $Q e_i = \eta_i e_i$ for all $i \in \mathbb{N}$.
We assume that $\eta_i>0$ for $i=1, \ldots, m$. Then,
the orthogonal projection of $W_t^Q$ to the $m$-dimensional subspace
$U_m = \Span \{ e_i : 1 \leq i \leq m \}$ of $U$ is given by
\begin{align*}
	W_t^{Q,m} = \sum_{i=1}^m \sqrt{\eta_i} \, e_i \, W_t^i , \quad t \in [0,T],
\end{align*}
where $(W_t^i)_{t \in [0,T]}$, $1 \leq i \leq m$, are independent
scalar Wiener processes, see, e.g., \cite{MR3236753,MR2329435}.
The corresponding finite-dimensional covariance
operator $Q_m$ is then defined as $Q_m = \Diag (\eta_1, \ldots, \eta_m) \in
\mathbb{R}^{m \times m}$ and the vector
$(\langle W_h^{Q,m}, e_i \rangle)_{1 \leq i \leq m}$
is multivariate $\Normal{0}{hQ_m}$ distributed for $0 < h \leq T$.
We want to simulate the Hilbert space valued iterated stochastic integral
\begin{align}\label{eq:iteratedintegrals-Q}
	\int_0^h \Psi \int_0^s \Phi \, \mathrm{d}W_u^{Q,m} \, \mathrm{d}W_s^{Q,m}
	= \sum_{i,j=1}^m \mathcal{I}_{(i,j)}^Q(h) \, \Psi( \Phi e_i,e_j)
\end{align}
where
$\mathcal{I}_{(i,j)}^Q(h) = \int_0^h \int_0^s \langle \mathrm{d}W_u^{Q},
e_i \rangle_U \, \langle \mathrm{d}W_s^{Q}, e_j \rangle_U$ and where $\Psi$
and $\Phi$
are some suitable linear operators, see \cite{MR3949104} for details. Let
$\mathcal{I}^{Q_m}(h) = ( \mathcal{I}_{(i,j)}^{Q}(h) )_{1 \leq i,j \leq m}$ denote the
corresponding $m \times m$-matrix. Since $\mathcal{I}^{Q_m}(h) = Q_m^{1/2}
\mathcal{I}(h) Q_m^{1/2}$, this random matrix can be expressed by a
transformation of the $m \times m$-matrix $\mathcal{I}(h)$. Therefore, we
mainly concentrate on the approximation of $\mathcal{I}_{(i,j)}(h)$ for $1 \leq
i,j \leq m$ in the following and refer to \cite{MR3949104} for more details
and error estimates in the infinite dimensional setting.
\subsection{The Fourier series approach}
\label{subsec:fourier-series}
We start by focusing on the Fourier series expansion of the Brownian bridge
process, see \cite{MR1214374,MR1178485,MR1335454}.
The integrated Fourier series expansion is the basis for all algorithms considered
in Section~\ref{sec:algorithms}.
Given an $m$-dimensional Wiener process $(W_t)_{t \in [0,h]}$, we
consider the corresponding tied down Wiener process $\left(W_t-\tfrac{t}{h}
W_h \right)_{t \in [0,h]}$, which is also called a Brownian bridge process,
whose components can be expanded into a Fourier series which results in
\begin{equation} \label{eqn-Fourier-BB}
    W_t^i = \frac{t}{h} W_h^i + \frac{1}{2} a_0^i + \sum_{r=1}^{\infty} \bigg( a_r^i
    \cos \bigg( \frac{2 \pi r}{h} t \bigg) + b_r^i \sin \bigg( \frac{2 \pi r}{h} t \bigg)
    \bigg) \quad \Prob\text{-a.s.}
\end{equation}
with random coefficients
\begin{align*}
    a_r^i = \frac{2}{h} \int_{0}^{h} \Big( W_s^i - \frac{s}{h} W_h^i \Big)
    \cos \bigg( \frac{2 \pi r}{h} s \bigg) \differential s \\
    \shortintertext{and}
    b_r^i = \frac{2}{h} \int_{0}^{h} \Big( W_s^i - \frac{s}{h} W_h^i \Big)
    \sin \bigg( \frac{2 \pi r}{h} s \bigg) \differential s
\end{align*}
for $1 \leq i \leq m$ and $t \in [0,h]$.
Using the distributional properties of the Wiener integral it easily follows that
the coefficients are Gaussian random variables with
\begin{equation} \label{eq:fourier-coeffs-distribution}
    a_0^i \sim \Normal{0}{\tfrac{1}{3}h} , \quad
    a_r^i \sim \Normal{0}{\tfrac{1}{2 \pi^2 r^2} h} \quad
    \text{ and } \quad
    b_r^i \sim \Normal{0}{\tfrac{1}{2 \pi^2 r^2} h} \, .
\end{equation}
From the boundary conditions it follows that $a_0^i = -2 \sum_{r=1}^{\infty}
a_r^i$. One can easily prove that the random variables $W_h^q$, $a_k^i$ and
$b_l^j$ for $i,j,q \in \{1, \ldots, m\}$ and $k,l \in \mathbb{N}$ are all independent,
while each $a_0^i$ depends on $a_r^i$ for all $r \in \mathbb{N}$.
Using this representation L\'{e}vy was the first to derive the following series
representation of what is now called L\'{e}vy area \cite{MR0044774} denoted
as $ \LevyArea_{(i,j)}(h) = \frac{1}{2}(\IterInt_{(i,j)}(h)-\IterInt_{(j,i)}(h)) $.
By integrating \eqref{eqn-Fourier-BB} with respect to the Wiener process
$(W_t^j)_{t \in [0,h]}$ and following the representation in
\cite{MR1214374,MR1178485}, we get the representation
\begin{align} \label{eqn-Expansion-iterated-integral}
    \mathcal{I}_{(i,j)}(h) &= \frac{1}{2} W_h^i W_h^j - \frac{1}{2} h \, \delta_{i,j}
    + A_{(i,j)}(h)
\end{align}
with the L\'{e}vy area
\begin{align} \label{eq:expansion-levy-area}
    A_{(i,j)}(h) &= \pi \sum_{r=1}^{\infty} r \,
    \Big( a_r^i \Big( b_r^j - \frac{1}{\pi r} W_h^j \Big) - \Big( b_r^i
    - \frac{1}{\pi r} W_h^i \Big) a_r^j \Big)
\end{align}
for $i, j \in \{1, \ldots, m\}$. This series converges in $L^2(\Omega)$, see, e.g.,
\cite{MR1214374,MR1178485,MR1335454}.
In the following, we consider the whole matrix of all iterated stochastic integrals
\begin{equation} \label{eq:Iter-Ito-Int-Levy}
	\mathcal{I}(h) = \frac{1}{2} \big( W_h W_h^\top - h \, I_m \big) + A(h)
\end{equation}
with identity matrix $I_m$ and with the L\'{e}vy area matrix $A(h)$ which
can be written as
\begin{equation} \label{eq:expansion-levy-area-split}
	A(h) = \sum_{r=1}^{\infty} \big( W_h a_r^\top-a_r W_h^\top \big)
	+ \pi \sum_{r=1}^{\infty} r \, \big( a_r b_r^\top - b_r a_r^\top \big)
\end{equation}
where $a_r = (a_r^i)_{1 \leq i \leq m}$ and $b_r = (b_r^i)_{1 \leq i \leq m}$.
The basic approach to the approximation of the L\'{e}vy area consists in
truncating the series \eqref{eq:expansion-levy-area-split}. Then, one
may additionally approximate some or even all of the arising rest terms
in order to improve the approximation.
If both sums in \eqref{eq:expansion-levy-area-split} are truncated at a
point $\Se \in \mathbb{N}$, we denote the truncated Lévy area matrix
as $\trunc{A}(h)$ and the rest terms as $ \trunc{R_1}(h) $ and $ \trunc{R_2}(h)$
where
\begin{align}
	\trunc{A}(h) &\coloneqq \pi \sum_{r=1}^{\Se} r \,
	\left( a_r \left( b_r-\frac{1}{\pi r}W_h \right)^\top
	- \left( b_r-\frac{1}{\pi r}W_h \right) a_r^\top \right) ,
	\label{eq:levy-area-trunc} \\
	\trunc{R_1}(h) &\coloneqq \sum_{r=\Se+1}^\infty W_h a_r^\top
	- a_r W_h^\top , \label{eq:a0-rest} \\
	\trunc{R_2}(h) &\coloneqq \pi \sum_{r=\Se+1}^\infty r \, \left( a_r b_r^\top
	- b_r a_r^\top \right) , \label{eq:ab-rest}
\end{align}
such that $ A(h) = \trunc{A}(h) + \trunc{R_1}(h) + \trunc{R_2}(h)$.
These terms build the basis for the simulation algorithms of the iterated
stochastic integrals that will be discussed in the following sections.
\subsection{Approximation vs. simulation}
\label{subsec:approx-vs-simulation}
In this article, we restrict our considerations to the simulation problem of
iterated stochastic integrals, which has to be distinguished from the
corresponding approximation problem. Although all algorithms under
consideration can also be applied for the approximation of iterated
stochastic integrals, we don't go into details here and refer to, e.g.,
\cite{Mrongowius2021,MR1807367,MR1843055}.
It is worth noting that for the approximation problem where
one is interested to approximate some fixed realization of the iterated
stochastic integrals together with the realization of the increments of
the Wiener process, one needs some information about the realization
of the path of the involved Wiener process. What kind of information is
needed depends on the approximation algorithm to be applied. E.g., for
a truncated Fourier series algorithm one may assume to have access to
the realizations of the first $n$ Fourier coefficients $a_0^i$, $a_r^i$ and
$b_r^i$ for $i \in \{1, \ldots, m\}$ and $1 \leq r \leq n$ together with the
realizations of the increments of the Wiener process. Then, using this
information the goal is to calculate an
approximation for the iterated stochastic integrals such that, e.g., the strong or
$L^2$-error is as small as possible.
However, for many problems one does not have this information about
realizations but rather tries to model uncertainty by, e.g., random processes.
In such situations, one is often interested in the simulation of possible
scenarios that may occur. Therefore, no prescribed information is given
a priori and one has to generate realizations of all involved random
variables. This can be done easily whenever one can sample from some
well known distribution. However, if the distribution of some random variable
such as an iterated stochastic integral is not known, one has to sample
from an approximate distribution. In contrast to weak or distributional
approximation, one still has to care about, e.g., the strong or $L^2$-error
between the simulated approximate realization and a corresponding
hypothetical exact realization. This means that for each simulated realization
one can find at least one corresponding posteriori realization such that
a strong or $L^2$-error estimate with some prescribed precision is always
fulfilled. Since in many or even nearly all practical situations information
about uncertainty is not explicitly available, stochastic simulation algorithms
are frequently used and therefore of course most relevant.
\subsection{Simulating with a prescribed precision}
\label{subsec:simu-precision}
In the following, we consider the problem of simulating realizations
of the iterated stochastic integrals
$\mathcal{I}(h) = ( \mathcal{I}_{(i,j)}(h) )_{1 \leq i,j \leq m}$
provided that the increments $W(h)=(W_h^i)_{1 \leq i \leq m}$ are
given. The simulation of realizations of the stochastically independent
increments $W_h^i$ is straightforward since their distribution $W_h^i \sim
\Normal{0}{h}$ is known and sampling from a Gaussian distribution can be
done easily in principle. However, for the simulation of the iterated stochastic
integrals the situation is different because the random variables
$\mathcal{I}_{(i,j)}(h)$ are not independent and as yet we don't know
their joint (conditional) distribution explicitly. Therefore, we need some
approximate sampling method.

There exist different approaches to do this, and we restrict
our attention to sampling algorithms where the $L^2$-error can be
controlled. This is important, especially if such a simulation algorithm is
combined with a numerical SDE or SPDE solver for the simulation of
root mean square approximations with some prescribed precision.

To be precise, given some precision $\varepsilon >0$ assume that our
simulation algorithm gives us by sampling a realization of the increments
$\hat{W}_h(\omega)$ and a realization of our approximation
$\hat{\mathcal{I}}^{\varepsilon}(h)(\omega)$ for the iterated stochastic integrals
for some $\omega \in \Omega$.
Then, for all algorithms under consideration we can assume that there
exist copies $\tilde{W}_h$ and $\tilde{\mathcal{I}}(h)$ of the random variables
$W_h$ and $\mathcal{I}(h)$ on the probability space $(\Omega, \mathcal{F},
\Prob)$ such that $\tilde{W}_h$ together with $\tilde{\mathcal{I}}(h)$
have the same joint distribution as $W_h$ together with $\mathcal{I}(h)$
and such that $\hat{W}_h(\omega)$, $\hat{\mathcal{I}}^{\varepsilon}(h)(\omega)$
are an approximation of $\tilde{W}_h(\omega)$,
$\tilde{\mathcal{I}}(h)(\omega)$ for $\Prob$-almost all $\omega \in \Omega$
in the sense that $\hat{W}_h = \tilde{W}_h$ $\Prob$-a.s.\ and
\begin{align*}
	\big\| \tilde{\mathcal{I}}_{(i,j)}(h)
	- \hat{\mathcal{I}}^{\varepsilon}_{(i,j)}(h) \big\|_{L^2(\Omega)} \leq \varepsilon
\end{align*}
for $1 \leq i,j \leq m$.
This is also known as a kind of coupling, see, e.g., \cite{MR1807367}
for details.
For simplicity of notation, we
do not distinguish between the random variables $W_h$, $\mathcal{I}(h)$
and their copies $\tilde{W}_h$, $\tilde{\mathcal{I}}(h)$ in the following.
\subsection{Relationship between different error criteria}
\label{subsec:relation-matrix-norms}
Usually, we are interested in simulating the whole matrix
$\mathcal{I}(h)$ with some prescribed precision. Therefore, it is often
appropriate to consider the $L^2$-error with respect to some suitable
matrix norm depending on the problem to be simulated. Here, we
focus our attention to the error with respect to the norms
\begin{align*}
	\| M \|_\maxL &=
	\max_{i,j} \, \| M_{i,j} \|_{L^2(\Omega)} &\quad \quad
	\| M \|_{L^2, \max} &
	= \Erw \big[ \max_{i,j} \, \abs{M_{i,j}}^2 \big]^{1/2}
	\\
	\| M \|_\FL &= \big( \sum_{i,j} \Erw \big[ \abs{M_{i,j}}^2 \big]
	\big)^{1/2}
	&\quad \quad
	\| M \|_\LF &= \Erw \big[ \|M\|_{\mathrm{F}}^2 \big]^{1/2}
\end{align*}
for some random matrix $M = (M_{i,j}) \in L^2(\Omega, \R^{m\times n})$
where $\| \cdot \|_{\mathrm{F}}$ denotes the Frobenius norm.
Clearly, the \FL-norm and the \LF-norm coincide in the $ L^2 $-setting.
In general, these norms are equivalent with
\begin{equation*}
	\| M \|_{\max, L^2} \leq \| M \|_{L^2, \max} \leq \| M \|_\LF
	\leq \sqrt{mn} \| M \|_{\max, L^2} \, .
\end{equation*}
In practice we mostly need the \maxL-norm and the
\LF-norm as these are the natural expressions that
show up if numerical schemes for the approximation of SDEs and SPDEs are
applied, respectively, see \cite[Cor. 10.6.5]{MR1214374} and \cite{MR3949104}.

Since the symmetric part of the iterated stochastic integrals can be calculated
exactly, we concentrate on the skew-symmetric L\'{e}vy area part.
Let $\Skew{m} \subset \R^{m\times m}$ denote the vector space of real,
skew-symmetric matrices.
Then, $\mathcal{I}(h) - \hat{\mathcal{I}}^{\varepsilon}(h)
= A(h) - \hat{A}^{\varepsilon}(h)$ where $\hat{A}^{\varepsilon}(h)$
is the corresponding approximation of the L\'{e}vy area matrix $A(h)$.
Note that the elements of the L\'{e}vy area matrix are identically
distributed and that $A(h), \hat{A}^{\varepsilon}(h) \in \Skew{m}$.
Therefore, the following more precise relationship between the matrix norms under
consideration is used in the following.
\begin{lemma} \label{lem:norm-aequivalenz}
	Let $ A \in L^2(\Omega,\Skew{m})$, i.e., $A(\omega)=-A(\omega)^\top$ for $ \Prob $-a.a.\ $ \omega\in\Omega $ and $ A_{i,j}\in L^2(\Omega) $ for all $ 1\leq i, j\leq m $. Then, it holds
	\begin{align}
		\| A \|_\LF &\leq \sqrt{m^2-m \mathstrut} \, \| A \|_\maxL \, ,
		\label{eq-Norm-Lemma-FL2-maxL2} \\
		\sqrt{2} \, \| A \|_{L^2, \max} &\leq \| A \|_\LF \, .
		\label{eq-Norm-Lemma-L2max-FL2}
	\end{align}
    Additionally, equality holds in \eqref{eq-Norm-Lemma-FL2-maxL2} if the elements $ A_{i,j} $ for $ 1\leq i, j\leq m $ with $ i\neq j $ have identical second absolute moments.
\end{lemma}
\begin{proof}
	Since $A \in L^2(\Omega,\Skew{m})$ it holds $ A_{i,i} = 0 $ $ \Prob $-a.s.\ for all
	$1 \leq i \leq m$. Then, it follows
	\begin{align*}
		\| A \|_\LF^2 &= \sum_{i,j}
		\Erw \big[ | A_{i,j} |^2 \big]
		\leq \sum_{\substack{i,j\\ i \neq j}}
		\max_{k,l} \Erw \big[ |A_{k,l}|^2 \big]
		= (m^2-m) \, \| A \|_\maxL^2 \, .
	\end{align*}
    If $ \Erw \big[ | A_{i,j} |^2 \big] $ is constant for all $ i\neq j $, then we have $ \Erw \big[ | A_{i,j} |^2 \big] = \max_{k,l} \Erw \big[ |A_{k,l}|^2 \big] $ for $ i\neq j $ and equality holds.
    
	Finally, the second inequality follows easily due to
	\begin{align*}
		\| A \|_{L^2,\max}^2 &= \Erw \big[ \max_{i,j} |A_{i,j}|^2 \big]
		= \Erw \big[ \max_{\substack{i,j \\ i<j}} |A_{i,j}|^2 \big]
		\leq \Erw \Big[ \sum_{\substack{i,j \\ i<j}} |A_{i,j}|^2 \Big]
		= \frac{1}{2} \Erw \Big[ \sum_{i,j} |A_{i,j}|^2 \Big] \, .
	\end{align*}
\end{proof}
\noindent
Taking into account the relationship between the described matrix norms,
we concentrate on the \maxL-error of the simulated L\'{e}vy area
\begin{equation*}
	\| A(h) - \hat{A}^{\varepsilon}(h) \|_\maxL \leq \varepsilon
\end{equation*}
for $\varepsilon>0$ because we can directly convert between the different
norms according to Lemma~\ref{lem:norm-aequivalenz}.
\section{The algorithms for the simulation of L\'{e}vy areas}
\label{sec:algorithms}
For the approximation of the iterated stochastic integrals $\mathcal{I}(h)$
in \eqref{eq:Iter-Ito-Int-Levy}, one has to approximate the corresponding
L\'{e}vy areas $A(h)$ in \eqref{eq:expansion-levy-area-split}.
A first approach might be to truncate the expansion of the L\'{e}vy areas at some
appropriate index $\Se$ as in \eqref{eq:levy-area-trunc} such that the truncation
error is sufficiently small.
This is the main idea for the first algorithm called Fourier algorithm which is
presented in Section~\ref{subsec:alg-fourier-series}.
In order to get some better approximation, one may take into account
either some parts or even the whole tail sum as well.
Adding the exact simulation of $\trunc{R_1}(h)$ in \eqref{eq:a0-rest},
which belongs to the Fourier coefficient $a_0^i$, results in the second
algorithm presented in
Section~\ref{subsec:alg-milstein} which is due to Milstein~\cite{MR1335454}.
In contrast to Milstein, in the seminal paper by Wiktorsson~\cite{MR1843055}
the whole tail sum $\trunc{R_1}(h) + \trunc{R_2}(h)$ is approximated.
The algorithm proposed by Wiktorsson is described in
Section~\ref{subsec:alg-wiktorsson} and has some improved
order of convergence compared to the first two algorithms. Finally, the recently
proposed algorithm by Mrongowius and R\"o{\ss}ler~\cite{Mrongowius2021}
combines the ideas of the algorithms due to Milstein and Wiktorsson by
simulating $\trunc{R_1}(h)$ exactly and by approximating the tail sum
$\trunc{R_2}(h)$ in a similar way as in the algorithm by Wiktorsson.
The algorithm by Mrongowius and R\"o{\ss}ler is described in
Section~\ref{subsec:alg-mronroe}.
In order to efficiently implement the two algorithms proposed by Wiktorsson
and by Mrongowius and R\"o{\ss}ler,
the central idea is to replace the involved matrices by the actions they encode.
This also helps avoiding computationally intensive Kronecker products.
In the following we discuss in detail mathematically equivalent
reformulations of these algorithms that finally lead to efficient
implementations with significantly reduced computational complexity.

Since the symmetric part of the iterated integrals is known explicitly, all
algorithms under consideration can be interpreted as algorithms for the
simulation of the skew-symmetric part which is exactly the L\'{e}vy area
$A(h)$. Moreover, due to the scaling
relationship~\eqref{eq:iteratedintegrals-time-scaling}
it is sufficient to focus our attention on the simulation of the L\'{e}vy area
with step size $h=1$ because the general case can be obtained by rescaling,
i.e., by multiplying the simulated L\'{e}vy area with the desired step size.
In the following
discussions of the different algorithms, we always assume that the
increments $W_h^i$ for $1 \leq i \leq m$, which are independent and
$\Normal{0}{h}$ distributed Gaussian random variables, are given.

\subsection{The Fourier algorithm}
\label{subsec:alg-fourier-series}
What we call here the Fourier algorithm can be considered the `easy'
approach --- not taking into account any approximation of the rest terms.
This variant has not been getting much attention in the literature, since with
very little additional
effort the error can be reduced by a factor of $1/\sqrt{3}$, see, e.g.,
\cite{Foster2022} and
Section~\ref{sec:error-estimates}.
However, since this is the foundation for all considered algorithms every
implementation detail automatically benefits all algorithms. Thus we discuss
it as its own algorithm.
\subsubsection{Derivation of the Fourier algorithm}
\label{subsubsec:fourier-derivation}
Let $\Se \in \mathbb{N}$ be given. Then, the approximation using the
Fourier algorithm is defined as
\begin{equation} \label{eq:fourier-levy-approx}
    \LAFS(h) \coloneqq \trunc{A}(h) ,
\end{equation}
i.e., just the truncated L\'{e}vy area without any further rest approximations.
Defining
\begin{equation} \label{eq:Def-alpha-beta-N01}
    \alpha_r^i = \sqrt{\frac{2 \pi^2 r^2}{h}} a_r^i
    \quad \text{ and } \quad
    \beta_r^i = \sqrt{\frac{2 \pi^2 r^2}{h}} b_r^i
\end{equation}
for $1 \leq i \leq m$ and $1 \leq r \leq \Se$ it follows that these random variables
are independent and standard Gaussian distributed. The approximation in
terms of these standard Gaussian random variables results in
\begin{equation} \label{eq:fourier-levy-approx-std-gaussian}
    \LAFS(h) = \frac{h}{2 \pi} \sum_{r=1}^{\Se} \frac{1}{r} \,
    \bigg( \alpha_r \bigg( \beta_r-\sqrt{\frac{2}{h}} W_h \bigg)^\top
    - \bigg( \beta_r-\sqrt{\frac{2}{h}} W_h \bigg) \alpha_r^\top \bigg),
\end{equation}
where $\alpha_r = (\alpha_r^i)_{1\leq i\leq m}$ and $\beta_r =
(\beta_r^i)_{1\leq i\leq m}$ are the corresponding random vectors.
\subsubsection{Implementation of the Fourier algorithm}
\label{subsubsec:fourier-implementation}
Looking at \eqref{eq:fourier-levy-approx-std-gaussian}, it is easy to see that
the dyadic products in the series are needed twice due to the relationship
$ \big( \beta_r-\sqrt{\tfrac{2}{h}} W_h \big) \alpha_r^\top = \big( \alpha_r
\big( \beta_r-\sqrt{\frac{2}{h}} W_h \big)^\top \big)^\top $. This means that
any efficient implementation of the above approximation should only compute
them once. Next, notice that this can be applied to the whole sum: instead of
evaluating each term in the series and then adding them together it is
more efficient to split the sum and evaluate it only once. Introducing
\begin{equation} \label{eq:S-FS-p}
    \SFS =  \sum_{r=1}^{\Se} \frac{1}{r} \, \Big( \alpha_r \Big(
    \beta_r - \sqrt{\tfrac{2}{h}} W_h \Big)^\top \Big)
\end{equation}
we can rewrite the Fourier approximation as
\begin{equation} \label{eq:std-FS-Alg-effective}
    \LAFS(h) = \frac{h}{2\pi} \Big( \SFS - {S^{\FS,\pp}}^{\top} \Big) .
\end{equation}
By first calculating and storing $\SFS$ and then computing $\LAFS(h)$
one can save $(p-1) m^2$ basic arithmetic operations
if \eqref{eq:std-FS-Alg-effective} is applied instead of naively
implementing \eqref{eq:fourier-levy-approx-std-gaussian}. The
algorithm based on \eqref{eq:std-FS-Alg-effective} is given as
pseudocode by Algorithm~\ref{alg:fourier}.

Next, we have a closer look at the calculation of $\SFS$.
For an implementation that results in a fast computation of the iterated
stochastic integrals, we want to exploit fast matrix multiplication routines
as provided by, e.g., specialised ``BLAS''\footnote{\url{http://www.netlib.org/blas/}}
(Basic Linear Algebra Subprograms) implementations like
OpenBLAS\footnote{\url{https://www.openblas.net/}} or the
Intel\textsuperscript{\textregistered} Math Kernel
Library\footnote{\url{https://software.intel.com/en-us/mkl}}.
In fact, if we gather the coefficient column vectors $\alpha_r$ and
$\beta_r$ for $1\leq r \leq \Se$ into the two $m \times \Se$-matrices
\begin{equation*}
    \alpha = (\alpha_1 \mid \ldots \mid \alpha_\Se)
    \quad \text{ and } \quad
    \tilde{\beta} = \left( \left. \frac{\beta_1 - \sqrt{\tfrac{2}{h}} W_h}{1}
    \right| %
    \ldots \left| %
    \frac{\beta_\Se - \sqrt{\tfrac{2}{h}} W_h}{\Se} \right. \right)
\end{equation*}
we can compute $\SFS$ as a matrix product $\SFS = \alpha
\tilde{\beta}^\top$ utilizing the aforementioned fast multiplication algorithms.
Of course, the drawback is that we have to store all coefficients in memory
at the same time whereas before we only needed to store $\alpha_r$ and
$\beta_r$ of the current iteration for the summation.

There is also a middle
ground between these two extremes --- for some $n \in \{1, \ldots, p\}$
generate the $m \times n$-matrices $\alpha^{(k)}$ and $\beta^{(k)}$ for
$1 \leq k \leq \lceil \tfrac{\Se}{n} \rceil$ by partitioning the columns of
the matrices $\alpha$ and $\tilde{\beta}$
in the same way into $\lceil \frac{\Se}{n} \rceil$ groups $\alpha^{(k)}$
and $\tilde{\beta}^{(k)}$ for $1 \leq k \leq \lceil \frac{\Se}{n} \rceil$,
each of maximal $n$ vectors, and adding together sequentially to get
$\SFS = \sum_{k=1}^{\lceil {\Se}/{n} \rceil} \alpha^{(k)}
\tilde{\beta}^{(k) \top}$.
Here, the firstly discussed extremes correspond to $n=1$ and $n=\Se$,
respectively.
\begin{algorithm}[htbp]
	\normalsize
	\caption{L\'{e}vy area --- Fourier method}
	\label{alg:fourier}
	\begin{algorithmic}[1]
		\Require{W \Comment{m-dimensional Wiener increment
				for $h=1$ [vector of length m]}}
		\Require{p \Comment{truncation parameter for tail sum [natural number]}}
		\Statex

		\Function{levyarea\textunderscore fourier}{$ W, p $}
		\State $ \alpha \gets  $ \Call{randn}{$ m, p $}  \Comment{generate
			$m \times p$ Gaussian random numbers}
		\State $ \beta \gets  $ \Call{randn}{$ m, p $}  \Comment{generate
			$m \times p$ Gaussian random numbers}

		\ForAll{columns $ \beta_r $ of $ \beta $}
		\State $\beta_r \gets \frac{1}{r} \big( \beta_r - \sqrt{2} W \big)$
		\EndFor
		\State $S \gets \alpha \beta^\top$ \Comment{call BLAS gemm function
			here}
		\State $A \gets \frac{1}{2\pi} \big(S-S^{\top} \big) $
		\State \Return $A$ \Comment{return approximation of L\'{e}vy area
			$A(1)$}
		\EndFunction
	\end{algorithmic}
\end{algorithm}
\subsection{The Milstein algorithm}
\label{subsec:alg-milstein}
Now we proceed with the algorithm that Milstein first investigated in his
influential work \cite{MR1335454}. He called it the Fourier method, but he
already incorporated a simple rest approximation compared to what we
call the Fourier algorithm in Section~\ref{subsec:alg-fourier-series}.
This algorithm has been generalised to multiple iterated integrals
by Kloeden, Platen and Wright  \cite{MR1178485}, see also \cite{MR1214374}.
Note that Dickinson~\cite{MR2352954} showed that this method is
asymptotically optimal in the setting where only a finite number of linear
functionals of the Wiener process are allowed to be used.
\subsubsection{Derivation of the Milstein algorithm}
\label{subsubsec:milstein-derivation}
For some prescribed $\Se \in \mathbb{N}$, the rest term approximation
employed by Milstein~\cite{MR1335454} is
concerned with the exact simulation of the term $\trunc{R_1}(h)$
given by \eqref{eq:a0-rest}. Utilizing the independence of the coefficients
$a_r^i$ and their distributional properties \eqref{eq:fourier-coeffs-distribution},
the random vector $ \sum_{r=\Se+1}^{\infty} a_r $ possesses
a multivariate Gaussian distribution with zero mean and covariance matrix
$ \frac{h}{2\pi^2} \sum_{r=\Se+1}^{\infty} \frac{1}{r^2}  I_m =
\frac{h}{2\pi^2}\psi_1(\Se+1) I_m $.
Here, $\psi_1$ denotes the trigamma function defined by $\psi_1(z) =
\frac{\mathrm{d}^2}{\mathrm{d}z^2} \ln(\Gamma(z))$ with respect to the
gamma function $\Gamma$ which can also be represented as the series
$\psi_1(z) = \sum_{n=0}^\infty \frac{1}{(z+n)^2}$. Thus, the vector $\gamma_1$
defined by
\begin{equation}\label{eq:milstein-mu}
    \gamma_1 = \sqrt{ \frac{2 \pi^2}{h \, \psi_1(\Se+1)} }
    \sum_{r=\Se+1}^{\infty} a_r
\end{equation}
is $\Normal{0_m}{I_m}$ distributed. Note that $\gamma_1$ is
independent from $W_h$,  from all $ \beta_k $ as well as from $\alpha_r$
for $r=1,\ldots,p$. Now, one can rewrite the rest term as
\begin{equation} \label{eq:milstein-rest-approx}
	\trunc{R_1}(h) = \sqrt{\frac{h \, \psi_1(\Se+1)}{2\pi^2}}
    \big( W_h\gamma_1^\top - \gamma_1 W_h^\top \big)
\end{equation}
and the approximation of the L\'{e}vy area $\LevyArea(h) \approx \LAMil(h)$
with the Milstein algorithm is defined by
\begin{equation} \label{eq:milstein-levy-approx}
    \LAMil(h) = \LAFS(h) + \trunc{R_1}(h)
\end{equation}
with $\trunc{R_1}(h)$
given by \eqref{eq:milstein-rest-approx}.
We emphasize that the exact simulation of $\trunc{R_1}(h)$ is
possible because the distribution of $\gamma_1$ is explicitly known
to be multivariate standard Gaussian.
Provided $W_h$ is given, we have to simulate \eqref{eq:std-FS-Alg-effective}
and we additionally have to add the term $\trunc{R_1}(h)$
given in
\eqref{eq:milstein-rest-approx} for the Milstein algorithm in order to reduce
the error. Note that the exact simulation of $\trunc{R_1}(h)$ by
\eqref{eq:milstein-rest-approx} does not improve the order of convergence.
However, it provides a by a factor of $1/\sqrt{3}$ smaller error constant and thus,
in most cases, less random variables are needed compared to the Fourier
algorithm in order to guarantee some prescribed error bound.
\subsubsection{Implementation of the Milstein algorithm}
\label{subsubsec:milstein-implementation}
For the implementation of the Milstein algorithm we again separate
the L\'{e}vy area approximation into two appropriate parts $\SMil$
and $-S^{\Mil,\pp\top}$.
Then, the Milstein approximation is computed as
\begin{equation}
    \LAMil(h) = \frac{h}{2\pi} \Big( \SMil-{S^{\Mil,\pp}}^\top \Big)
\end{equation}
where looking at \eqref{eq:milstein-rest-approx} and taking into account
\eqref{eq:S-FS-p}, we see that $\SMil$ can be expressed as
\begin{equation}
    \SMil = \SFS + \sqrt{2\psi_1(\Se+1)} \, \frac{W_h}{\sqrt{h}} \, \gamma_1^\top
\end{equation}
and thus can be easily simulated. Finally, the Milstein algorithm is presented
as pseudocode in Algorithm~\ref{alg:milstein}.
\begin{algorithm}[htb]
    \normalsize
    \caption{L\'{e}vy area --- Milstein method}
    \label{alg:milstein}
    \begin{algorithmic}[1]
        \Require{W \Comment{m-dimensional Wiener increment
        	for $h=1$ [vector of length m]}}
        \Require{p \Comment{truncation parameter for tail sum [natural number]}}
        \Statex

        \Function{levyarea\textunderscore milstein}{$ W, p $}
        \State $ \alpha \gets  $ \Call{randn}{$ m, p $}  \Comment{generate
            $m \times p$ Gaussian random numbers}
        \State $ \beta \gets  $ \Call{randn}{$ m, p $}  \Comment{generate
            $m \times p$ Gaussian random numbers}
        \State $ \gamma_1 \gets  $ \Call{randn}{$ m $}  \Comment{generate
            $m \times 1$ Gaussian random numbers}

        \ForAll{columns $ \beta_r $ of $ \beta $}
        \State $\beta_r \gets \frac{1}{r} \big( \beta_r - \sqrt{2} W \big)$
        \EndFor
        \State $S \gets \alpha\beta^\top $ \Comment{call BLAS gemm function here}
        \State $S \gets S + \sqrt{2 \, \Call{trigamma}{p+1}} \, W \gamma_1^\top$ \Comment{simple tail approximation}
        \State $A \gets \frac{1}{2\pi} \Big( S-S^{\top} \Big) $
        \State \Return $A$ \Comment{return approximation of L\'{e}vy area
            $A(1)$}
        \EndFunction
    \end{algorithmic}
\end{algorithm}
\subsection{The Wiktorsson algorithm}
\label{subsec:alg-wiktorsson}
In the Milstein algorithm, the tail sum $ \trunc{R}(h) $ is only partially
taken into account which reduces the error constant but has no influence on
the order of convergence w.r.t.\ the number of necessary realizations of
independent Gaussian random variables.
An improvement of the order of convergence has been first achieved by
Wiktorsson~\cite{MR1843055}. He did not incorporate $\trunc{R_1}(h)$
on its own but instead tried to find an approximation of the whole tail sum
$\trunc{R}(h)$ by analysing its conditional distribution in a new way.
In this section, we briefly explain the main idea Wiktorsson used to derive his
sophisticated approximation of the truncation terms. Therefore, we first
introduce a suitable notation together with some useful identities.
Wiktorsson's idea is also the basis for the algorithm proposed by Mrongowius
and R\"o{\ss}ler~\cite{Mrongowius2021}.
\subsubsection{Vectorization and the Kronecker product representation}
\label{subsubsec:vec-identities}
In the following, let $\vecop \colon \mathbb{R}^{m \times m} \to
\mathbb{R}^{m^2}$ be the column-wise vectorization operator for quadratic
matrices and let $\matop \colon \mathbb{R}^{m^2} \to
\mathbb{R}^{m \times m}$ be its inverse mapping such that
$\matop(\vecop(A)) = A$ for all matrices $A \in \mathbb{R}^{m \times m}$.
Then, for the Kronecker product of two vectors $u,v \in \mathbb{R}^m$
it holds $v \otimes u = \vecop(u v^\top)$.
Further, let
$P_m \in \mathbb{R}^{m^2 \times m^2}$ be the permutation matrix that satisfies
$P_m(u \otimes v) = v \otimes u$ for all $u,v \in \mathbb{R}^m$.
Note that it can be explicitly constructed as $ P_m = \sum_{i,j = 1}^m
e_ie_j^\top \otimes e_je_i^\top $ where $e_i$ denotes the $i$th canonical
basis vector in $\mathbb{R}^m$. Then, it follows that
\begin{equation} \label{eq:Pmvec}
    P_m \vecop(A) = \vecop(A^\top)
\end{equation}
and we have the identity
\begin{equation} \label{eq:kronvec}
    (A \otimes I_m) \vecop(B) = \vecop(B A^\top)
\end{equation}
for $A, B \in \mathbb{R}^{m \times m}$, see, e.g., \cite{MR611262,MR520247}.

Since we are dealing with skew-symmetric matrices, it often suffices to consider
the $M = \tfrac{m(m-1)}{2}$ entries below the diagonal. Therefore,
Wiktorsson~\cite{MR1843055} applies the matrix $K_m \in \mathbb{R}^{M \times
m^2}$ that selects the entries below the diagonal of a vectorized $m \times
m$-matrix. This matrix can be explicitly defined by
\begin{equation*}
	K_m =
	\sum_{1 \leq j<i \leq m} \tilde{e}_{i-\frac{j(j+1)}{2}+(j-1)m}
	(e_j^\top \otimes e_i^\top)
\end{equation*}
where $ \tilde{e}_k $ is the $ k $th canonical basis vector in $ \mathbb{R}^{M} $.
Note that the matrix $ K_m $ is clearly not invertible when considered as
acting from $ \mathbb{R}^{m^2} $ to $ \mathbb{R}^M $, but it has $K_m^\top$
as a right inverse. If we restrict the domain of $ K_m $ to the $M$-dimensional subspace
$\{ \vecop(A) : A \text{ strictly lower triangular} \} \subset\mathbb{R}^{m^2}$, then $ K_m^\top $ is
also a left inverse to $ K_m $. That means, we can reconstruct at least all
skew-symmetric matrices in the sense that
\begin{equation}\label{eq:reconstruct-skew-vector}
    (I_{m^2} -P_m) K_m^\top K_m \vecop(A) = \vecop(A)
\end{equation}
for any skew-symmetric $A \in \mathbb{R}^{m \times m}$.
As a result of this, it holds that
\begin{equation} \label{eq:IPKKIP}
	(I_{m^2}-P_m) K_m^\top K_m (I_{m^2}-P_m) = I_{m^2}-P_m \,.
\end{equation}
On the other hand, if we are given a vector $ a\in\mathbb{R}^M $ we get the
corresponding skew-symmetric matrix $ A\in\mathbb{R}^{m\times m} $
using the $ \matop $-operator as
\begin{equation}\label{eq:reconstruct-skew-matrix}
    A = \matop \big( (I_{m^2} -P_m) K_m^\top a \big) \,.
\end{equation}
\subsubsection{Derivation of the Wiktorsson algorithm}
\label{subsubsec:wiktorsson-derivation}
Due to the skew-symmetry of the L\'{e}vy area matrix $A(h)$, we can
restrict our considerations to the vectorization of the lower triangle.
In this way, following the idea of Wiktorsson~\cite{MR1843055}
it is possible to analyse the conditional covariance structure
of the resulting vector and to reduce the L\'{e}vy area approximation problem
to the problem of finding a good approximation of the covariance matrix.
Applying this approach to the tail sum $\trunc{R}(h)$ of the truncated series
\eqref{eq:levy-area-trunc} and using \eqref{eq:Def-alpha-beta-N01}, it holds
that
\begin{equation}\label{eq:vec-R-p}
    \vecop(\trunc{R}(h)) = \frac{h}{2 \pi} \sum_{r=\Se+1}^{\infty}
    (P_m - I_{m^2}) \bigg( \alpha_r \otimes \frac{1}{r}
    \bigg(\beta_r-\sqrt{\frac{2}{h}}W_h \bigg) \bigg) ,
\end{equation}
which is an $\mathbb{R}^{m^2}$-valued random vector. Since $\trunc{R}(h)$
is skew-symmetric as well, it suffices to consider the $M$ entries below
the diagonal. Let $\trunc{r}(h) = K_m \vecop(\trunc{R}(h))$ denote the
$M$-dimensional random vector of the elements below the diagonal of
$\trunc{R}(h)$.
Now, for each $\Se$ the random vector $\frac{\sqrt{2} \pi}{h \sqrt{\psi_1(\Se+1)}}
\trunc{r}(h)$ is conditionally Gaussian with conditional expectation $0_M$
and some conditional covariance matrix $\trunc{\Sigma}=\trunc{\Sigma}(h)$ which
depends on the random vectors $\beta_r$ , $r \geq \Se+1$ and $ W_h $.
We use a slightly different scaling than Wiktorsson, because we think it better highlights the structure of the covariance matrices. In this setting the covariance matrix is given by
\begin{align}
\begin{split}\label{eq:sigma-p-wik}
    \trunc{\Sigma} &= \frac{1}{2}K_m(I_{m^2}-P_m)
    \cdot\Big(\frac{1}{\psi_1(p+1)}\sum\limits_{r=p+1}^\infty\frac{1}{r^2}\big(\beta_r-\sqrt{\frac{2}{h}}W_h \big)\big(\beta_r-\sqrt{\frac{2}{h}}W_h \big)^\top\otimes I_m\Big) \\
    &\quad\cdot (I_{m^2}-P_m)K_m^\top \,.
\end{split}
\end{align}
Therefore, there exists a standard normally distributed $M$-dimensional
random vector $\gamma$ such that
\begin{align} \label{eq:gamma-p-R-p}
	\sqrt{\trunc{\Sigma}} \, {\gamma}
	= \frac{\sqrt{2} \pi}{h \sqrt{\psi_1(p+1)}} \, \trunc{r}(h) \quad \Prob\text{-a.s.}
\end{align}
As $\Se \to \infty$, these covariance matrices converge in the
\LF-norm to the matrix
\begin{equation} \label{eq:sigmainf}
    \Sigma^{\infty} = I_M + \frac{1}{h} K_m (I_{m^2} -P_m)
    (W_h W_h^{\top} \otimes I_m ) (I_{m^2} -P_m) K_m^{\top}
\end{equation}
that is independent of the random vectors $\beta_r$, see
\cite[Thm.~4.2]{MR1843055}.
Now, we can take the underlying $\Normal{0_M}{I_M}$ distributed random vector
$\gamma$ from \eqref{eq:gamma-p-R-p} in order to approximate the
tail sum as
\begin{equation}
    \trunc{r}(h) \approx \frac{h\sqrt{\psi_1(\Se+1)}}{\sqrt{2}\pi} \,
    \sqrt{\Sigma^{\infty}} \, {\gamma}
\end{equation}
where the exact conditional covariance matrix $\trunc{\Sigma}$ in
\eqref{eq:gamma-p-R-p} is replaced by $\Sigma^{\infty}$, which can be
computed explicitly. Finally, we can approximate the corresponding
skew-symmetric matrix $\trunc{R}(h)$ by the reconstructed matrix
$\app{R}^{\Wik,\pp}(h)$ defined as
\begin{equation}\label{eq:wiktorsson-rest-approx}
    \app{R}^{\Wik,\pp}(h)
    = \matop \Big( (I_{m^2} -P_m) K_m^\top \frac{h}{\sqrt{2} \pi} \sqrt{\psi_1(p+1)}
    \sqrt{\Sigma^{\infty}} {\gamma} \Big)
\end{equation}
due to \eqref{eq:reconstruct-skew-matrix}. The matrix square root of
$\Sigma^{\infty}$ in the above formula can be explicitly calculated as
\begin{equation}\label{eq:sqrtsigmainf}
    \sqrt{\Sigma^{\infty}} = \frac{ \Sigma^{\infty}
    	+ \sqrt{1 +\frac{\norm{W_h}^2}{h}}
    	I_M }{ 1 +\sqrt{1 +\frac{\norm{W_h}^2}{h}} } ,
\end{equation}
see \cite[Thm.~4.1]{MR1843055}. Thus, the approximation of the L\'{e}vy area
$\LevyArea(h) \approx \LAWik(h)$ using the tail sum approximation as
proposed by Wiktorsson is defined as
\begin{equation}\label{eq:wiktorsson-levy-approx}
    \LAWik(h) = \LAFS(h) + \app{R}^{\Wik,\pp}(h)
\end{equation}
with $\LAFS(h)$ and $\app{R}^{\Wik,(p)}(h)$ given by
\eqref{eq:fourier-levy-approx} and \eqref{eq:wiktorsson-rest-approx},
respectively.
\subsubsection{Implementation  of the Wiktorsson algorithm}
\label{subsubsec:wiktorsson-implementation}
Having a closer look at the tail approximation \eqref{eq:wiktorsson-rest-approx},
it is easy to see that it is not directly suitable for an efficient implementation.
Multiple matrix-vector products involving matrices of
size $m^2 \times m^2$ result in an algorithmic complexity of roughly
$\Oo(m^4)$. Additionally the Kronecker product with an identity matrix as
it appears in the representation of $\Sigma^{\infty}$, from an algorithmic
point of view, only serves to duplicate values. Among other things, this leads
to unnecessary high memory requirements. Therefore, we derive a better
representation using the properties of the $\vecop$-operator mentioned in
Section~\ref{subsubsec:vec-identities}.

As a first step, let us insert expression \eqref{eq:sigmainf} for $\Sigma^\infty$
into the representation \eqref{eq:sqrtsigmainf} of $\sqrt{\Sigma^\infty}$.
Then, we get
\begin{align}
    \sqrt{\Sigma^\infty} &= \frac{ I_M + \frac{1}{h} K_m (I_{m^2} -P_m)
    (W_h W_h^\top \otimes I_m) (I_{m^2} -P_m) K_m^\top + \sqrt{1
    +\frac{\| W_h \|^2}{h}} I_M }{ 1 +\sqrt{1 +\frac{\| W_h \|^2}{h}} }
	\nonumber\\
    &= \frac{ K_m (I_{m^2} -P_m) (W_h W_h^\top \otimes I_m )
    (I_{m^2} -P_m) K_m^\top }{ h \Big(1 +\sqrt{1 +\frac{\| W_h \|^2}{h}} \Big)}
	+ I_M . \label{eq:sqrtsigmainf2}
\end{align}
Using \eqref{eq:IPKKIP} and \eqref{eq:wiktorsson-rest-approx}, we obtain
that
\begin{equation} \label{eq:Rpapprox2}
	\begin{split}
    \app{R}^{\Wik,\pp}(h) &= \frac{h}{2\pi} \matop \bigg( (I_{m^2} -P_m)
    \bigg( \frac{ (W_h W_h^\top \otimes I_m) (I_{m^2} -P_m)
    \sqrt{2 \psi_1(\Se+1)} K_m^\top {\gamma} }{ h \Big( 1 +\sqrt{1 +\frac{ \| W_h \|^2
     }{h}} \Big)} \\
    &\quad + \sqrt{2 \psi_1(\Se+1)} K_m^\top {\gamma} \bigg)\bigg) .
    \end{split}
\end{equation}
Next, observing that ${\gamma}$ is always immediately reshaped by $K_m^\top$
we define the random $m \times m$-matrix $\varGamma = \matop \big( K_m^\top
\underline{\underline{\underline{}}}{\gamma} \big)$ to be the reshaped matrix.
As a result of this, it holds
$ K_m^\top {\gamma} = \vecop({\varGamma})$ with ${\varGamma}_{i,j} \sim
\Normal{0}{1}$ for $i > j$ and ${\varGamma}_{i,j}=0$ for $i\leq j$.
Now, we apply \eqref{eq:kronvec} as well as \eqref{eq:Pmvec} in order to arrive
at the final representation
\begin{equation}
    \begin{split}
    \app{R}^{\Wik,\pp}(h) &= \frac{h}{2\pi} \matop \bigg( (I_{m^2} -P_m) \vecop
    \bigg( \frac{\sqrt{2 \psi_1(\Se+1)}({\varGamma} - \varGamma^{\top})
    W_h W_h^\top }{ h \Big( 1 +\sqrt{1 +\frac{\| W_h \|^2}{h}} \Big)} \\
	&\quad + \sqrt{2 \psi_1(\Se+1)}{\varGamma} \bigg) \bigg) .
    \end{split}
\end{equation}
With \eqref{eq:S-FS-p} we define
\begin{equation}
    \SWik = \SFS
    + \frac{ \sqrt{2 \psi_1(\Se+1)} ({\varGamma} - \varGamma^{\top}) }{
    1 +\sqrt{1 +\frac{\| W_h \|^2}{h}} } \cdot \frac{W_h W_h^\top}{h}
    + \sqrt{2 \psi_1(\Se+1)}{\varGamma}
\end{equation}
which allows for an efficient implementation of Wiktorsson's algorithm using
\begin{equation}\
    \LAWik(h) = \frac{h}{2\pi} \Big( \SWik - {S^{\Wik,\pp}}^\top \Big) .
\end{equation}
This version of Wiktorsson's algorithm that is optimized for an efficient
implementation can be found in Algorithm~\ref{alg:wiktorsson} presented as
pseudocode.
\begin{algorithm}[htb]
    \normalsize
    \caption{L\'{e}vy area --- Wiktorsson method}
    \label{alg:wiktorsson}
    \begin{algorithmic}[1]
        \Require{W \Comment{m-dimensional Wiener increment for $h=1$
        		[vector of length m]}}
        \Require{p \Comment{truncation parameter for tail sum [natural number]}}
        \Statex

        \Function{levyarea\textunderscore wiktorsson}{$ W, p $}
        \State $ \alpha \gets  $ \Call{randn}{$ m, p $}  \Comment{generate
        	$m \times p$ Gaussian random numbers}
        \State $ \beta \gets  $ \Call{randn}{$m, p$}  \Comment{generate
        	$m \times p$ Gaussian random numbers}
		\State $ \varGamma \gets  $ \Call{randn\textunderscore tril}{$ m, m $}
			\Comment{gen.\ strictly lower triangular matrix of $ \frac{m(m-1)}{2} $
				Gaussian r.n.}

        \ForAll{columns $ \beta_r $ of $ \beta $}
        \State $ \beta_r \gets \frac{1}{r} \big( \beta_r - \sqrt{2}W \big) $
        \EndFor
        \State $ S \gets \alpha \beta^\top $ \Comment{call BLAS gemm function
        	here}
        \State $ {\varGamma} \gets \sqrt{2\,\Call{trigamma}{p+1}} \, \varGamma$
        \State $ S \gets S + \big( 1 +\sqrt{1 +\| W \|^2} \big)^{-1}
        \big( {\varGamma} -{\varGamma}^{\top} \big) W W^\top
        + {\varGamma} $
        \State $ A \gets \frac{1}{2\pi} \big( S-S^{\top} \big) $
        \State \Return $ A $ \Comment{return approximation of L\'{e}vy area $A(1)$}
        \EndFunction
    \end{algorithmic}
\end{algorithm}
\subsection{The Mrongowius--Rößler algorithm}
\label{subsec:alg-mronroe}
Compared to the Fourier algorithms that make use of linear functionals of the
Wiener processes only, the algorithm proposed by Wiktorsson \cite{MR1843055}
reduces the computational cost significantly by including also a kind of non-linear
information of the Wiener processes for the additional approximation of the
remainder terms. However, Wiktorsson's algorithm needs the calculation of
the square root of a $M \times M$ covariance matrix, which can be expensive
for high dimensional problems.
The following algorithm that has been recently proposed by Mrongowius and
R\"o{\ss}ler~\cite{Mrongowius2021} improves the algorithm
by Wiktorsson
such that the error estimate allows for a smaller constant by a factor
$1/{\sqrt{5}}$, see Section~\ref{sec:error-estimates}. In addition,
the covariance matrix used in the algorithm by Mrongowius and R\"o{\ss}ler
is just the identity matrix and thus independent of $W_h$.
Compared to Wiktorsson's algorithm, this makes the calculation of the
square root of the full covariance matrix $\Sigma^\infty$ depending on $W_h$
in \eqref{eq:sqrtsigmainf2} obsolete. This allows for saving
computational cost, especially in the context of the simulation of SDE and
SPDE solutions where the value of $W_h$ varies each time step.
Thus, the algorithm by Mrongowius and R\"o{\ss}ler
saves computational cost and allows for an easier implementation that is
discussed in the following.
\subsubsection{Derivation of the Mrongowius--R\"o{\ss}ler algorithm}
\label{subsubsec:mronroe-derivation}
Instead of approximating the whole rest term $\trunc{R}(h)$ at once as in
Wiktorsson's algorithm, Mrongowius and R\"o{\ss}ler combine the exact
approximation of the rest term $ \trunc{R_1}(h) $ in \eqref{eq:milstein-rest-approx}
with an approximation of the second rest term $ \trunc{R_2}(h) $ that is
related to Wiktorsson's approach.
For the approximation of $\trunc{R_2}(h)$, we rewrite this term as
\begin{equation}\label{eq:vec-R-2-p}
	\vecop(\trunc{R_2}(h)) = \frac{h}{2 \pi} \sum_{r=p+1}^{\infty} \frac{1}{r}
	(P_m -I_{m^2}) ( \alpha_r \otimes \beta_r )
\end{equation}
and we define the $M$-dimensional vector $\trunc{r_2}(h) = K_m
\vecop(\trunc{R_2}(h))$. Then, the random vector
$\frac{\sqrt{2} \pi}{h \sqrt{\psi_1(\Se+1)}} \trunc{r_2}(h)$ is conditionally
Gaussian with conditional expectation $0_M$ and conditional covariance
matrix $\trunc{\Sigma_2}$ that depends on $\alpha_r$ , $r \geq \Se+1$.
Here, the covariance matrix is given by (cf.\ \eqref{eq:sigma-p-wik})
\begin{align}\label{eq:sigma-p-mr}
    \trunc{\Sigma_2} = \frac{1}{2}K_m(I_{m^2}-P_m) \Big(I_m\otimes\frac{1}{\psi_1(p+1)}\sum\limits_{r=p+1}^\infty\frac{1}{r^2}\alpha_r\alpha_r^\top\Big) (I_{m^2}-P_m)K_m^\top \,.
\end{align}
As a result of this, it holds
\begin{align*}
	\frac{\sqrt{2} \pi}{h \sqrt{\psi_1(\Se+1)}} \trunc{r_2}(h)
	= \sqrt{\trunc{\Sigma_2}} {\gamma_2}
\end{align*}
with some $\Normal{0_M}{I_M}$ distributed random vector ${\gamma_2}$ that
is given by
\begin{align} \label{eq:RV-Psi-2-n-Exact}
	{\gamma_2} = \frac{\sqrt{2} \pi}{h \sqrt{\psi_1(\Se+1)}}
	\Big( \trunc{\Sigma_2} \Big)^{-1/2} \trunc{r_2}(h) \, .
\end{align}
Mrongowius and R\"o{\ss}ler showed that these covariance matrices converge as
$\Se \to \infty$ in the \LF-norm to the very simple,
constant identity matrix \cite[Prop. 4.2]{Mrongowius2021}
\begin{equation}
	\Sigma_2^{\infty}
    = I_M \,.
\end{equation}
Next, the approximation of the remainder term $\trunc{r_2}(h)$ based on the
approximate covariance matrix $\Sigma_2^{\infty}$ is calculated as
\begin{align} \label{Def-hat-A2n}
	\trunc{\app{r_2}}(h) &= \frac{h}{\sqrt{2} \pi} \sqrt{\psi_1(\Se+1)}
	\big( \Sigma_2^{\infty} \big)^{1/2} {\gamma_2}
	= \frac{h}{\sqrt{2} \pi} \sqrt{\psi_1(\Se+1)} {\gamma_2}
\end{align}
where ${\gamma_2}$ is the $\Normal{0_M}{I_M}$ distributed random vector in
\eqref{eq:RV-Psi-2-n-Exact}.
From this vector we can rebuild the full $ m\times m $ matrix as shown in
\eqref{eq:reconstruct-skew-matrix} by
\begin{equation}\label{eq:mronroe-rest-approx}
    \app{R_2}^{\MR,\pp}(h) =
    \matop \Big( (I_{m^2}-P_m) K_m^\top \trunc{\app{r_2}}(h) \Big) \, .
\end{equation}
Now, the approximation of the L\'{e}vy area by the Mrongowius--Rößler
algorithm is defined by the standard Fourier approximation combined with
two rest term approximations
\begin{equation}\label{eq:mronroe-levy-approx}
    \LAMR(h) =
    \LAFS(h) + \trunc{R_1}(h) + \app{R_2}^{\MR,\pp}(h)
\end{equation}
with
$\LAFS(h)$, $\trunc{R_1}(h)$ and $\app{R_2}^{\MR,\pp}(h)$
defined in \eqref{eq:fourier-levy-approx}, \eqref{eq:milstein-rest-approx} and
\eqref{eq:mronroe-rest-approx}, respectively.
\subsubsection{Implementation of the Mrongowius--R\"o{\ss}ler algorithm}
\label{subsubsec:mronroe-implementation}
Similar to Subsection \ref{subsubsec:wiktorsson-implementation}, we simplify
the expression \eqref{eq:mronroe-rest-approx} by eliminating the
matrices $ P_m $ and $ K_m $ as well as the costly reshaping operation.
This can be done in a much easier way due to the simple structure of the
covariance matrix for this algorithm.
First, we define $ {\varGamma_2} = \matop \big( K_m^\top {\gamma_2} \big) $
to be the lower triangular matrix corresponding to $ {\gamma_2} $. Then, it
follows that
\begin{equation}
	\app{R_2}^{\MR,\pp}(h) = \frac{h}{{2} \pi} \sqrt{2 \psi_1(\Se+1)}
	\big( {\varGamma_2} - \varGamma_2^{\top} \big) \, .
\end{equation}
Now, with $\SFS$ in \eqref{eq:S-FS-p} we can define
\begin{equation}
    \SMR = \SFS
    + \sqrt{2\psi_1(\Se+1)} \, \frac{W_h}{\sqrt{h}} \, \gamma_1^\top
    + \sqrt{2\psi_1(\Se+1)} \, \varGamma_2
\end{equation}
and then we finally arrive at
\begin{equation}
    \LAMR(h) = \frac{h}{2\pi} \Big( \SMR - {S^{\MR,\pp}}^\top \Big)
\end{equation}
that allows for an efficient implementation. This formulation of the algorithm
due to Mrongowius and R\"o{\ss}ler is also given in Algorithm~\ref{alg:mronroe}
as pseudocode.
\begin{algorithm}[H]
	\normalsize
	\caption{L\'{e}vy area --- Mrongowius--R\"o{\ss}ler method}
	\label{alg:mronroe}
	\begin{algorithmic}[1]
		\Require{W \Comment{m-dimensional Wiener increment for $h=1$
				[vector of length m]}}
		\Require{p \Comment{truncation parameter for tail sum [natural number]}}
		\Statex

		\Function{levyarea\textunderscore mrongowius\textunderscore roessler}{$ W, p $}
		\State $ \alpha \gets  $ \Call{randn}{$m, p$}  \Comment{generate
			$m \times p$ Gaussian random numbers}
		\State $ \beta \gets  $ \Call{randn}{$m, p$}  \Comment{generate
			$m \times p$ Gaussian random numbers}
		\State $ \gamma_1 \gets  $ \Call{randn}{$m, 1$}  \Comment{generate
			$m \times 1$ Gaussian random numbers}
		\State $ \varGamma_2 \gets  $ \Call{randn\textunderscore tril}{$ m, m $}
			\Comment{gen.\ strictly lower triangular matrix of $ \frac{m(m-1)}{2} $ Gaussian r.n.}

		\ForAll{columns $ \beta_r $ of $ \beta $}
		\State $ \beta_r \gets \frac{1}{r} \big( \beta_r - \sqrt{2} \, W \big) $
		\EndFor
		\State $ S \gets \alpha \beta^\top$ \Comment{call BLAS gemm function
			here}
		\State $ S \gets S + \sqrt{2\,\Call{trigamma}{p+1}} (W \gamma_1^\top + \varGamma_2)$
		\State $ A \gets \frac{1}{2\pi} \big( S-S^{\top} \big) $
		\State \Return $A$ \Comment{return approximation of L\'{e}vy area
			$A(1)$}
		\EndFunction
	\end{algorithmic}
\end{algorithm}
\section{Error estimates and simulation of iterated stochastic integrals}
\label{sec:error-estimates}
The considered algorithms differ significantly with respect to their accuracy
and computational cost. Therefore, we first compare the corresponding
approximation error estimates in the \maxL-norm.
For $h>0$ and $p \in \mathbb{N}$, let
\begin{align} \label{eq:def-hat-I-Alg-p}
	\hat{\mathcal{I}}^{\Alg,\pp}(h) = \frac{W_h W_h^\top - h I_m}{2}
	+ \hat{A}^{\Alg,\pp}(h)
\end{align}
where '$\Alg$' denotes the corresponding algorithm that is applied for the
approximation of the L\'{e}vy areas.
\begin{theorem} \label{thm:approx-error-estimates}
	Let $h>0$, $p \in \mathbb{N}$, let $\mathcal{I}$ denote the matrix
	of all iterated stochastic integrals as in \eqref{eq:Iter-Ito-Int-Levy}
	and let $\hat{\mathcal{I}}$ denote the corresponding approximations
	defined in \eqref{eq:def-hat-I-Alg-p}.
	\begin{enumerate}[(i)]
		\item \label{thm:fourier-error}
		For the Fourier algorithm with $\LAFS(h)$ as in
		\eqref{eq:fourier-levy-approx-std-gaussian}, it holds
		\begin{equation} \label{eq:fourier-error}
			\|\mathcal{I}(h)-\hat{\mathcal{I}}^{\FS,\pp}(h)\|_\maxL
			\leq \sqrt{\frac{3}{2\pi^2}}\cdot\frac{h}{\sqrt{p}} \, .
		\end{equation}
		\item \label{thm:milstein-error}
		For the Milstein algorithm with $\LAMil(h)$ as in
		\eqref{eq:milstein-levy-approx}, it holds
		\begin{equation} \label{eq:milstein-error}
			\|\mathcal{I}(h)-\hat{\mathcal{I}}^{\Mil,\pp}(h)\|_\maxL
			\leq \sqrt{\frac{1}{2\pi^2}}\cdot\frac{h}{\sqrt{p}} \, .
		\end{equation}
		\item \label{thm:wiktorsson-error}
		For the Wiktorsson algorithm with $\LAWik(h)$ as in
		\eqref{eq:wiktorsson-levy-approx}, it holds
		\begin{equation} \label{eq:wiktorsson-error}
			\|\mathcal{I}(h)-\hat{\mathcal{I}}^{\Wik,\pp}(h)\|_\maxL
			\leq \sqrt{\frac{5m}{12\pi^2}}\cdot\frac{h}{p} \, .
		\end{equation}
		\item \label{thm:mronroe-error}
		For the Mrongowius--R\"o{\ss}ler algorithm with $\LAMR(h)$ as in
		\eqref{eq:mronroe-levy-approx}, it holds
		\begin{equation} \label{eq:mronroe-error}
			\|\mathcal{I}(h)-\hat{\mathcal{I}}^{\MR,\pp}(h)\|_\maxL
			\leq \sqrt{\frac{m}{12\pi^2}}\cdot\frac{h}{p} \, .
		\end{equation}
	\end{enumerate}
\end{theorem}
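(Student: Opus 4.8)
Since $\mathcal{I}(h)-\hat{\mathcal{I}}^{\Alg,\pp}(h)=A(h)-\hat{A}^{\Alg,\pp}(h)$ by \eqref{eq:Iter-Ito-Int-Levy} and \eqref{eq:def-hat-I-Alg-p}, it suffices to bound the L\'evy‑area error. Using the coupling of Section~\ref{subsec:simu-precision} (the increment $W_h$ and the first $p$ Fourier pairs are reproduced exactly, and for the Milstein and Mrongowius--R\"o{\ss}ler algorithms also $\trunc{R_1}(h)$, because $\sqrt{2\pi^2/(h\psi_1(p+1))}\sum_{r>p}a_r$ has exactly the law of $\gamma_1$), the errors equal $\trunc{R_1}(h)+\trunc{R_2}(h)$ for the Fourier algorithm, $\trunc{R_2}(h)$ for the Milstein algorithm, $\trunc{R}(h)-\app{R}^{\Wik,\pp}(h)$ for Wiktorsson's algorithm, and $\trunc{R_2}(h)-\app{R_2}^{\MR,\pp}(h)$ for the Mrongowius--R\"o{\ss}ler algorithm. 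All four are skew‑symmetric with identically distributed off‑diagonal entries (the construction is invariant under permutations of the coordinate indices), so by Lemma~\ref{lem:norm-aequivalenz} one may estimate $\|\cdot\|_\LF$ and divide by $\sqrt{m^2-m}$ at the end; for (i) and (ii) it is anyway simplest to compute $\|\cdot\|_{L^2(\Omega)}$ of one off‑diagonal entry directly.

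For (i) and (ii), fix $i\neq j$ and write the $(i,j)$‑entry of the $r$‑th term of $\trunc{R_1}(h)$ as $\xi_r=W_h^ia_r^j-a_r^iW_h^j$ and of $\trunc{R_2}(h)$ as $\eta_r=\pi r(a_r^ib_r^j-b_r^ia_r^j)$, cf.\ \eqref{eq:a0-rest} and \eqref{eq:ab-rest}. By independence of the coefficients, $\Erw[\xi_r\xi_s]=\Erw[\eta_r\eta_s]=0$ for $r\neq s$, and $\Erw[\xi_r\eta_s]=0$ for all $r,s$ (a third‑order moment of centred Gaussians), so the entrywise second moments are $\sum_{r>p}(\Erw[\xi_r^2]+\Erw[\eta_r^2])$ and $\sum_{r>p}\Erw[\eta_r^2]$. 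From \eqref{eq:fourier-coeffs-distribution} one gets $\Erw[\xi_r^2]=h^2/(\pi^2r^2)$ and $\Erw[\eta_r^2]=h^2/(2\pi^2r^2)$, hence these moments equal $\tfrac{3h^2}{2\pi^2}\psi_1(p+1)$ and $\tfrac{h^2}{2\pi^2}\psi_1(p+1)$, and \eqref{eq:fourier-error}, \eqref{eq:milstein-error} follow from $\psi_1(p+1)=\sum_{r>p}r^{-2}\le\int_p^\infty x^{-2}\differential x=1/p$.

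For (iii) and (iv) the key observation is that the approximation keeps the same standard normal vector while replacing the conditional covariance square root by the limiting one: by \eqref{eq:gamma-p-R-p} and \eqref{eq:wiktorsson-rest-approx} (resp.\ \eqref{eq:RV-Psi-2-n-Exact} and \eqref{Def-hat-A2n}), the error in the vector of sub‑diagonal entries is $\tfrac{h}{\sqrt2\pi}\sqrt{\psi_1(p+1)}\,(\sqrt{\trunc{\Sigma}}-\sqrt{\Sigma^\infty})\gamma$, with $\sqrt{\Sigma_2^\infty}=I_M$ and $\gamma_2$ in the Mrongowius--R\"o{\ss}ler case. Since the reconstruction $\matop((I_{m^2}-P_m)K_m^\top\cdot)$ satisfies $\|\cdot\|_F^2=2\|\cdot\|^2$ on the sub‑diagonal coordinates, $\|\trunc{R}(h)-\app{R}^{\Wik,\pp}(h)\|_\LF^2=2\,\Erw[\|\tfrac{h}{\sqrt2\pi}\sqrt{\psi_1(p+1)}(\sqrt{\trunc{\Sigma}}-\sqrt{\Sigma^\infty})\gamma\|^2]$; conditioning on $\sigma(W_h,\beta_r:r>p)$, w.r.t.\ which $\gamma$ is standard normal and independent and $\trunc{\Sigma},\Sigma^\infty$ are measurable, turns this into $\tfrac{h^2\psi_1(p+1)}{\pi^2}\Erw[\|\sqrt{\trunc{\Sigma}}-\sqrt{\Sigma^\infty}\|_F^2]$. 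Because $\Sigma^\infty\succeq I_M$ by \eqref{eq:sigmainf}, the square‑root perturbation bound $\|\sqrt A-\sqrt B\|_F\le(\sqrt{\lambda_{\min}(A)}+\sqrt{\lambda_{\min}(B)})^{-1}\|A-B\|_F$ (from the Sylvester identity $\sqrt A(\sqrt A-\sqrt B)+(\sqrt A-\sqrt B)\sqrt B=A-B$) gives $\Erw[\|\sqrt{\trunc{\Sigma}}-\sqrt{\Sigma^\infty}\|_F^2]\le\Erw[\|\trunc{\Sigma}-\Sigma^\infty\|_F^2]$, and similarly with $I_M$ for the Mrongowius--R\"o{\ss}ler algorithm. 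To evaluate the latter I would use that, via \eqref{eq:kronvec}--\eqref{eq:Pmvec}, the operators in \eqref{eq:sigma-p-wik} and \eqref{eq:sigma-p-mr} act on $S\in\Skew{m}$ as $S\mapsto\tfrac12(ES+SE)$ for a symmetric matrix $E$; such an operator is symmetric with eigenvalues $\tfrac12(\mu_i+\mu_j)$, $i<j$, where $\mu_k$ are the eigenvalues of $E$, whence its squared Frobenius norm is $\tfrac14[(m-2)\|E\|_F^2+(\trace E)^2]$. With $E=\tfrac1{\psi_1(p+1)}\sum_{r>p}\tfrac1{r^2}(\beta_r-\sqrt{2/h}\,W_h)(\beta_r-\sqrt{2/h}\,W_h)^\top-\Erw[\,\cdot\mid W_h]$ for Wiktorsson (and with $\beta_r-\sqrt{2/h}\,W_h$ replaced by $\alpha_r$ and $\Erw[\,\cdot\mid W_h]$ by $I_m$ for Mrongowius--R\"o{\ss}ler), the quantities $\Erw[\|E\|_F^2]$ and $\Erw[(\trace E)^2]$ reduce to variances of (possibly non‑central) $\chi^2$ laws times $\sum_{r>p}r^{-4}/\psi_1(p+1)^2$, and a short computation yields
\begin{equation*}
	\Erw\big[\|\trunc{\Sigma}-\Sigma^\infty\|_F^2\big]=\frac{5m^2(m-1)}{4}\cdot\frac{\sum_{r>p}r^{-4}}{\psi_1(p+1)^2},\qquad
	\Erw\big[\|\trunc{\Sigma_2}-I_M\|_F^2\big]=\frac{m^2(m-1)}{4}\cdot\frac{\sum_{r>p}r^{-4}}{\psi_1(p+1)^2},
\end{equation*}
the factor $5=4+1$ being $\Erw[\tfrac4h\norm{W_h}^2+m]/m$ — precisely the source of Wiktorsson's larger constant. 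Combining the above, $\|\mathcal{I}(h)-\hat{\mathcal{I}}^{\Wik,\pp}(h)\|_\maxL^2\le\tfrac{5mh^2}{4\pi^2}\cdot\tfrac{\sum_{r>p}r^{-4}}{\psi_1(p+1)}$ and $\|\mathcal{I}(h)-\hat{\mathcal{I}}^{\MR,\pp}(h)\|_\maxL^2\le\tfrac{mh^2}{4\pi^2}\cdot\tfrac{\sum_{r>p}r^{-4}}{\psi_1(p+1)}$, so \eqref{eq:wiktorsson-error} and \eqref{eq:mronroe-error} follow from the elementary inequality $3p^2\sum_{r>p}r^{-4}\le\sum_{r>p}r^{-2}=\psi_1(p+1)$.

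I expect the two delicate points to be, first, the exact evaluation of $\Erw[\|\trunc{\Sigma}-\Sigma^\infty\|_F^2]$ — in particular obtaining the factor $5$ (resp.\ $1$) rather than an over‑estimate — which requires the diagonalisation of the symmetrised‑multiplication operator on $\Skew{m}$ together with careful bookkeeping of the fourth moments of the non‑central $\chi^2$ fluctuations in the tail sum; and second, the sharp comparison $3p^2\sum_{r>p}r^{-4}\le\psi_1(p+1)$, whose two sides are asymptotically equal as $p\to\infty$, so that a crude integral bound does not deliver it and a more careful argument (a telescoping comparison, with the first few values of $p$ verified by hand) is needed.
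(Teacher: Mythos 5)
Your proposal is correct in substance but takes a genuinely more self-contained route than the paper. The paper only proves part (i) directly -- by essentially the same entrywise second-moment computation you perform for $\trunc{R_1}(h)+\trunc{R_2}(h)$, arriving at the identical value $\tfrac{3h^2}{2\pi^2}\sum_{r>p}r^{-2}$ -- while it cites Milstein and Kloeden--Platen for (ii), cites Mrongowius--R\"o{\ss}ler for (iv), and obtains (iii) merely by rewriting Wiktorsson's original theorem as an \LF-bound and dividing by $\sqrt{m^2-m}$ via Lemma~\ref{lem:norm-aequivalenz} (the same norm-conversion you use, with the same implicit reliance on identically distributed off-diagonal entries of the error matrix). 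Your from-scratch treatment of (iii) and (iv) is therefore the real added content, and it checks out: the identification of $\trunc{\Sigma}$ and $\trunc{\Sigma_2}$ with the symmetrised multiplication $V\mapsto\tfrac12(EV+VE)$ on $\Skew{m}$, its spectrum $\tfrac12(\mu_i+\mu_j)$ for $i<j$, the resulting formula $\tfrac14\big[(m-2)\|E\|_F^2+(\trace E)^2\big]$, the square-root perturbation bound using $\Sigma^\infty\succeq I_M$, and the $\chi^2$-moment bookkeeping all reproduce the constants $\tfrac{5m^2(m-1)}{4}$ and $\tfrac{m^2(m-1)}{4}$ exactly (I verified the factor $5$ arising from $\Erw\big[\tfrac4h\|W_h\|^2+m\big]=5m$). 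The one step you leave open, $3p^2\sum_{r>p}r^{-4}\le\psi_1(p+1)$, is indeed delicate -- a plain integral bound fails, as you note -- but it does close without case checking: by convexity the midpoint rule gives $\sum_{r>p}r^{-4}\le\int_{p+1/2}^{\infty}x^{-4}\differential x=\tfrac{1}{3(p+1/2)^3}$ and the trapezoidal rule gives $\sum_{r>p}r^{-2}\ge\tfrac{1}{p+1}+\tfrac{1}{2(p+1)^2}$, so the claim reduces to $2p^2(p+1)^2\le(2p+3)(p+\tfrac12)^3$, whose difference is $2p^3+4p^2+\tfrac52 p+\tfrac38>0$ for all $p\ge1$. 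With that supplied, your argument yields all four bounds, including the improved Wiktorsson constant, with no external citation -- at the price of the covariance computations that the paper deliberately outsources to \cite{MR1335454}, \cite{MR1843055} and \cite{Mrongowius2021}.
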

To the best of our knowledge, there is no reference for \eqref{eq:fourier-error}
yet. Therefore, we present the proof for completeness. The proof for
error estimate
\eqref{eq:milstein-error} can be found in \cite{MR1335454,MR1214374}. The
presented error estimate \eqref{eq:wiktorsson-error} follows from the
original result in \cite{MR1843055} and is an improvement that we also prove
in the following. Finally, for the error estimate \eqref{eq:mronroe-error}
we refer to \cite{Mrongowius2021}.
\begin{proof} %
\begin{enumerate}[(i)]
    \item For the proof of \eqref{eq:fourier-error}, we first observe that
	$\Erw \big[ \big| \mathcal{I}_{(i,i)}(h)
	-\hat{\mathcal{I}}_{(i,i)}^{\FS,\pp}(h) \big|^2 \big] = 0$ for $i=1, \ldots, m$.
	Further, noting that all $a_k^i$ and $b_l^j$ for $k,l \in \mathbb{N}$ and
	$i,j=1,\ldots,m$ are i.i.d.\ Gaussian random variables with distribution
	given in \eqref{eq:fourier-coeffs-distribution} that are also independent
	from $W_h^q$ for $q=1, \ldots, m$, we calculate with
	\eqref{eq:expansion-levy-area} for $i \neq j$ that
	\begin{align}
		\Erw \Big[ \big| \mathcal{I}_{(i,j)}(h)
		-\hat{\mathcal{I}}_{(i,j)}^{\FS,\pp}(h) \big|^2\Big]
		&= \Erw \Big[ \Big| \pi \sum_{r=p+1}^{\infty} r
		\Big(a_r^i \Big( b_r^j -\frac{1}{\pi r} W_h^j \Big)
		-\Big( b_r^i -\frac{1}{\pi r} W_h^i \Big) a_r^j \Big) \Big|^2 \Big] \nonumber \\
		&= 2\pi^2 \sum_{r=p+1}^{\infty} r^2 \,
		\Erw \Big[ \big( a_r^{i} \big)^2 \Big] \,
		\Erw \Big[ \Big( b_r^j -\frac{1}{\pi r} W_h^j \Big)^2 \Big] \nonumber \\
		&= \frac{3h^2}{2\pi^2} \sum_{r=p+1}^{\infty} \frac{1}{r^2}
		\, . \label{Eq-max-L2-error-FS-exact}
	\end{align}
	Then, the error estimate \eqref{eq:fourier-error} follows with the estimate
	\begin{equation*} %
		\sum_{r=p+1}^{\infty} \frac{1}{r^2}
		\leq \int_{p}^{\infty} \frac{1}{r^2} \differential r
		= \frac{1}{p} \, .
	\end{equation*}
    \setcounter{enumi}{2}
    \item In order to prove \eqref{eq:wiktorsson-error}, we proceed analogously to \cite{Mrongowius2021}. Instead of \cite[Lemma 4.1]{MR1843055} we employ the variant \cite[Lemma 4.3]{Mrongowius2021} and note that one can prove the following stronger version of \cite[Theorem 4.2]{MR1843055} similar to \cite[Proposition 4.2]{Mrongowius2021}
    \begin{align*}
        &\sum_{q=1}^{M} \Erw\Big[\big\lvert (\Sigma^{\pp}-\Sigma^\infty)_{r,q} \big\rvert^2\,\Big|\, W_h\Big] \\
        &\quad= \frac{\sum_{k=p+1}^{\infty}1/k^4}{(2\sum_{k=p+1}^{\infty}1/k^2)^2}
        \cdot 2\bigg(m+m\frac{(W_h^i)^2}{h}+m\frac{(W_h^j)^2}{h}+2\frac{\lVert W_h\rVert^2}{h}\bigg)
    \end{align*}
    for all $ 1\leq r\leq M $ where $ 1\leq j<i\leq m $ with $ (j-1)(m-\frac{j}{2})+i-j=r $.
    Recognizing that $ \Erw\Big[ m+m\frac{(W_h^i)^2}{h}+m\frac{(W_h^j)^2}{h}+2\frac{\lVert W_h\rVert^2}{h} \Big] = 5m $ and using that $ \frac{\sum_{k=p+1}^{\infty}1/k^4}{(\sum_{k=p+1}^{\infty}1/k^2)^2} \leq \frac{1}{3p} $ (see also the proof of \cite[Theorem 4.2]{MR1843055}) completes the proof.
\end{enumerate}
\end{proof}
Here, it has to be pointed out that the Milstein algorithm is asymptotically
optimal in the class of algorithms that only make use of linear functionals of the
Wiener processes like, e.g., the Fourier algorithm, see \cite{MR2352954}.
Algorithms that belong to this class have also been considered in the
seminal paper by Clark and Cameron, see \cite{MR609181}, where as a consequence
it is shown that the same order of convergence as for the Milstein algorithm can be
obtained if the Euler--Maruyama scheme is applied for approximating
the iterated stochastic integrals.
On the other hand, the algorithms by Wiktorsson and by Mrongowius and
R\"o{\ss}ler do not belong to this class as they make also use of non-linear
information about the Wiener process. That is why these two algorithms allow
for a higher order of convergence w.r.t.\ the parameter $p$. While the
algorithm by Wiktorsson is based on the Fourier algorithm approach,
the improved algorithm by Mrongowius and R\"o{\ss}ler is build on the
asymptotically optimal approach of the Milstein algorithm. As a result of this,
the algorithm by Mrongowius and R\"o{\ss}ler allows for a smaller error
constant by a factor $1/{\sqrt{5}}$ compared to the Wiktorsson algorithm.

Next to the error estimates in $L^2(\Omega)$-norm presented in
Theorem~\ref{thm:approx-error-estimates}, it is worth mentioning that
there also exist error estimates in general $L^q(\Omega)$-norm with $q>2$
for the Milstein algorithm and for the Mrongowius--R\"o{\ss}ler algorithm, see
\cite[Prop. 4.6, Thm. 4.8]{Mrongowius2021}.

\begin{algorithm}[htb]
	\normalsize
	\caption{Calculation of the iterated integrals}
	\label{alg:iterated-integrals}
	\begin{algorithmic}[1]
		\Require{$ W_h $ \Comment{m-dimensional Wiener increment [vector of length m]}}
		\Require{$ h $ \Comment{step size of the increment [scalar]}}
		\Require{$ \Se $ \Comment{truncation parameter for tail sum [natural number]}}
		\Statex

		\Function{iterated\textunderscore integrals}{$ W_h, h, \Se $}
		\Comment{based on the error estimates}
		\State $ W_{\mathrm{std}}\gets \frac{1}{\sqrt{h}}W_h $ \Comment{standardisation}
		\State $ \LevyArea\gets $\Call{levy\textunderscore area}{$ W_{\mathrm{std}},\Se $}
		 \Comment{call algorithm for L\'{e}vy area approximation}
		\State $ \IterInt_{\mathrm{std}}\gets \frac{1}{2}W_{\mathrm{std}}W_{\mathrm{std}}^\top
		- \frac{1}{2}I_m + \LevyArea $
		\State $ \IterInt\gets h\cdot\IterInt_{\mathrm{std}} $
		\State \Return $ \IterInt $ \Comment{return matrix with iterated stoch.\ integrals}
		\EndFunction
	\end{algorithmic}
\end{algorithm}
For the simulation of the iterated stochastic integrals
$\hat{\mathcal{I}}^{\Alg,\pp}$ by making use of \eqref{eq:def-hat-I-Alg-p}
and by applying one of the presented algorithms ($\Alg$)
in Sections~\ref{subsec:alg-fourier-series}--\ref{subsec:alg-mronroe}
for the approximate simulation of the L\'{e}vy areas we
refer to the pseudocode of Algorithm~\ref{alg:iterated-integrals}.
That is, given a realization of an increment of the $m$-dimensional Wiener process
$W_h$ w.r.t.\ step size $h>0$ and a value of the truncation parameter $p \in \mathbb{N}$
for the tail sum of the L\'{e}vy area approximation as input parameters,
Algorithm~\ref{alg:iterated-integrals} calculates and returns the
matrix $\hat{\mathcal{I}}^{\Alg,\pp}(h)$ based on the algorithm
$\Call{levy\textunderscore area}$ for the L\'{e}vy area approximation
that has to be replaced by the choice of any of the algorithms described
in Sections~\ref{subsec:alg-fourier-series}--\ref{subsec:alg-mronroe}.

If iterated stochastic integrals are needed for numerical approximations
of, e.g., solutions of SDEs in the root mean square sense, then one has to
simulate these iterated stochastic integrals with sufficient accuracy in
order to preserve the overall convergence rate of the approximation scheme.
For SDEs, let $\gamma >0$ denote the order of convergence in the $L^2$-norm of
the numerical scheme under consideration, e.g., $\gamma=1$
for the well known Milstein scheme \cite{MR1214374,MR1335454} or a
corresponding stochastic Runge--Kutta scheme \cite{MR2669396}.
Then, the required order of convergence for the approximation
of some random variable like, e.g., an iterated stochastic integral,
that occurs in a given numerical scheme for SDEs
such that its order $\gamma$ is preserved has been established in
\cite[Lem.~6.2]{MR1335454} and \cite[Cor.~10.6.5]{MR1214374}.
Since it holds $\Erw \big[ \mathcal{I}_{(i,j)}(h)
- \hat{\mathcal{I}}^{\Alg,\pp}_{(i,j)}(h) \big] = 0$ for
$ 1\leq i,j\leq m $ for all algorithms under consideration in
Section~\ref{sec:algorithms}, the iterated
stochastic integral approximations
need to fulfil
\begin{equation}\label{eq:integralerrorbound}
	\big\| \mathcal{I}_{(i,j)}(h)
	-\hat{\mathcal{I}}^{\Alg,\pp}_{(i,j)}(h) \big\|_{L^2(\Omega)}
	\leq c_1 \, h^{\gamma+\frac{1}{2}}
\end{equation}
for all $ 1\leq i,j\leq m $, all sufficiently small $h>0$ and
some constant $c_1 > 0$ independent of $h$.
This condition can equivalently be expressed using the \maxL-norm as
\begin{equation}\label{eq:integralerrorbound-matrix}
    \big\| \mathcal{I}(h) - \hat{\mathcal{I}}^{\Alg,\pp}(h)
    \big\|_\maxL \leq c_1 \, h^{\gamma+\frac{1}{2}} \,.
\end{equation}
Together with Theorem~\ref{thm:approx-error-estimates} this
condition specifies how to choose the truncation parameter $\Se$.

For the Fourier algorithm (Alg.~\ref{alg:fourier}) or the Milstein
algorithm (Alg.~\ref{alg:milstein}) we have to choose $\Se$
such that
\begin{equation} \label{eq:fourier-cutoff}
	\Se \geq \frac{c_2}{c_1^2} \cdot h^{1-2\gamma}
	\in \mathcal{O} \big( h^{1-2\gamma} \big)
\end{equation}
with $c_2=\frac{3}{2\pi^2}$ for the Fourier algorithm and
$c_2=\frac{1}{2\pi^2}$ for the Milstein algorithm.

On the other hand, for the Wiktorsson algorithm (Alg.~\ref{alg:wiktorsson})
or the Mrongowius--R\"o{\ss}ler algorithm (Alg.~\ref{alg:mronroe})
we can choose $p$ such that
\begin{equation} \label{eq:wik-cutoff}
	\Se \geq \frac{\sqrt{c_3}}{c_1} \cdot h^{\frac{1}{2}-\gamma}
	\in \mathcal{O} \big( h^{\frac{1}{2}-\gamma} \big)
\end{equation}
where $c_3 = \frac{5m}{12 \pi^2}$ for the Wiktorsson algorithm and
$c_3 = \frac{m}{12 \pi^2}$ for the Mrongowius--R\"o{\ss}ler algorithm.

As a result of this, we have to choose $\Se \in \mathcal{O} \big( h^{-1} \big)$
if Algorithm~\ref{alg:fourier} or Algorithm~\ref{alg:milstein} is
applied and $\Se \in \mathcal{O} \big( h^{-\frac{1}{2}} \big)$ if
Algorithm~\ref{alg:wiktorsson} or Algorithm~\ref{alg:mronroe} is
applied in the Milstein scheme or any other numerical scheme for SDEs
which has an order of convergence
$\gamma=1$ in the $L^2$-norm and thus also strong order of convergence
$\gamma=1$. Hence, the algorithm by Wiktorsson as well as the algorithm
by Mrongowius and R\"o{\ss}ler typically need a significantly lower value of the truncation
parameter $\Se$ compared to the Fourier algorithm and the Milstein algorithm.
Additionally, for the algorithm by Mrongowius and R\"o{\ss}ler the parameter
$p$ can be chosen to be by a factor $1/\sqrt{5}$ smaller than for the algorithm
by Wiktorsson.
\section{Comparison of the algorithms and their performance}
\label{sec:theo-comp}
In this section we will look at the costs of each algorithm to achieve a
given precision. In particular we will see how to choose the optimal parameters for
each algorithm in a given setting. After that we show the results
of a simulation study to confirm the theoretical order of convergence for
each algorithm.
\subsection{Comparison of computational cost}
\label{subsec:costs}
We will measure the computational cost in terms of the number of
random numbers drawn from a standard Gaussian distribution that have to be
generated for the simulation of all twice iterated stochastic
integrals corresponding to one given
increment of an underlying $m$-dimensional Wiener process.
For a fixed truncation parameter $p$ these can be easily counted in the
derivation or in the pseudocode of the algorithms presented above.
E.g., Algorithm~\ref{alg:fourier} needs $2pm$ random numbers
corresponding to the Fourier coefficients $\alpha_r^i$ and $\beta_r^i$
for $1 \leq r \leq p$ and $1 \leq i \leq m$.
For each algorithm these costs are summarised in
Table~\ref{tab:random-numbers}.
\begin{table}%
	\renewcommand{\arraystretch}{1.3}
	\centering
	\begin{tabular}{@{}ll@{}}
		\toprule
		Algorithm & \# $\Normal{0}{1}$ random numbers \\
		\midrule
		Fourier & $ 2\Se m $ \\
		Milstein & $ 2\Se m+m $ \\
		Wiktorsson & $ 2\Se m+\frac{m^2-m}{2} $ \\
		Mrongowius--Rößler \quad & $ 2\Se m+\frac{m^2-m}{2}+m $ \\
		\bottomrule
	\end{tabular}
	\caption{Number of random numbers that need to be generated
		in terms of the dimension~$m$ of the Wiener process and the
		truncation parameter~$ \Se $.}
	\label{tab:random-numbers}
\end{table}

In practice however, a fixed precision $\bar{\varepsilon}$ is usually
given with respect to some norm for the error
and we are interested in the minimal cost to simulate the iterated stochastic
integrals to this precision.
For this we need to choose the truncation parameter
$p = p(m,h,\bar{\varepsilon})$ that may depend
on the dimension~$m$, the step size~$h$ and the desired precision~$\bar{\varepsilon}$,
as discussed in Section~\ref{sec:error-estimates}, and insert this
expression for $p$ into the cost in Table~\ref{tab:random-numbers}. Again,
considering for example the Fourier algorithm (Alg.~\ref{alg:fourier}),
we calculate from \eqref{eq:fourier-error} that the truncation parameter
has to be chosen as $p \geq \frac{3h^2}{2\pi^2 \bar{\varepsilon}^2}$
if we measure the error in the \maxL-norm. This allows us to calculate
the actual cost to be $c(m,h,\bar{\varepsilon}) = 2pm
= \frac{3h^2m}{\pi^2 \bar{\varepsilon}^2}$. If instead we want to bound
the error in the \LF-norm, we have to
choose $p \geq \frac{3h^2(m^2-m)}{2\pi^2 \bar{\varepsilon}^2}$
with the cost of $c(m,h, \bar{\varepsilon})
= \frac{3h^2(m^3-m^2)}{\pi^2 \bar{\varepsilon}^2}$.

On the other hand, sometimes we are willing to invest a specific
amount of computation time or cost and are interested in the
minimal error we can achieve without exceeding this given cost
budget. Rearranging the expression from above for the cost, we find
that given some fixed cost budget $\bar{c}>0$ we can get an upper bound on
the \maxL-error for the Fourier algorithm
of $\varepsilon(m,h,\bar{c})
= \frac{\sqrt{3} h \sqrt{m}}{\pi \sqrt{\bar{c}}}$.
In Table~\ref{tab:theoreticalcomparison},
we list the lower bound for the truncation parameter $p(m,h, \bar{\varepsilon})$,
the resulting cost $c(m,h, \bar{\varepsilon})$ and the minimal
achievable error $\varepsilon(m,h,\bar{c})$
for a prescribed error $\bar{\varepsilon}$ or a
given cost budget $\bar{c}$ for all four algorithms and
two different error criteria introduced in
Section~\ref{subsec:relation-matrix-norms}.
\begin{table}%
	\renewcommand{\arraystretch}{1.7}
	\centering
	\begin{tabular}{@{}llll@{}}
		\toprule
		Algorithm & Objective & \multicolumn{2}{c}{Applied error criterion} \\
		\cmidrule{3-4}
		&& $\|\cdot\|_\maxL$ & $\|\cdot\|_\LF$ \\
		\midrule
		Fourier &&& \\
		& cut-off \ $ p(m,h, \bar{\varepsilon}) $ & $ \frac{3h^2}{2\pi^2 \bar{\varepsilon}^2} $ & $ \frac{3h^2(m^2-m)}{2\pi^2 \bar{\varepsilon}^2} $ \\
		& cost \ $ c(m,h, \bar{\varepsilon}) $ & $ \frac{3h^2m}{\pi^2 \bar{\varepsilon}^2} $ & $ \frac{3h^2(m^3-m^2)}{\pi^2 \bar{\varepsilon}^2} $ \\
		& error \ $ \varepsilon(m,h,\bar{c}) $ & $ \frac{\sqrt{3}}{\pi}\cdot\frac{h\cdot\sqrt{m}}{\sqrt{\bar{c}}} $ & $ \frac{\sqrt{3}}{\pi}\cdot\frac{h\cdot m\sqrt{m-1}}{\sqrt{\bar{c}}} $ \\
		Milstein &&& \\
		& cut-off \ $ p(m,h, \bar{\varepsilon}) $ & $ \frac{h^2}{2\pi^2 \bar{\varepsilon}^2} $ & $ \frac{h^2(m^2-m)}{2\pi^2 \bar{\varepsilon}^2} $ \\
		& cost \ $ c(m,h, \bar{\varepsilon}) $ & $ \frac{h^2m}{\pi^2 \bar{\varepsilon}^2}+m $ & $ \frac{h^2(m^3-m^2)}{\pi^2 \bar{\varepsilon}^2}+m $ \\
		& error \ $ \varepsilon(m,h,\bar{c}) $ & $ \frac{1}{\pi}\cdot\frac{h\cdot\sqrt{m}}{\sqrt{\bar{c}-m}} $ & $ \frac{1}{\pi}\cdot\frac{h\cdot m\sqrt{m-1}}{\sqrt{\bar{c}-m}} $ \\
		Wiktorsson &&& \\
		& cut-off \ $ p(m,h, \bar{\varepsilon}) $ & $ \frac{\sqrt{5}h\sqrt{m}}{\sqrt{12}\pi \bar{\varepsilon}} $ & $ \frac{\sqrt{5}hm\sqrt{m-1}}{\sqrt{12}\pi \bar{\varepsilon}} $ \\
		& cost \ $ c(m,h, \bar{\varepsilon}) $ & $ \frac{\sqrt{5}hm^{3/2}}{\sqrt{3}\pi \bar{\varepsilon}}+\frac{m^2-m}{2} $ & $ \frac{\sqrt{5}hm^2\sqrt{m-1}}{\sqrt{3}\pi \bar{\varepsilon}}+\frac{m^2-m}{2} $ \\
		& error \ $ \varepsilon(m,h,\bar{c}) $ & $ \frac{2\sqrt{5}}{\sqrt{3}\pi}\cdot\frac{h\cdot m^{3/2}}{2 \bar{c} - m^2 + m} $ & $ \frac{2\sqrt{5}}{\sqrt{3}\pi}\cdot\frac{h\cdot m^2\sqrt{m-1}}{2 \bar{c} - m^2 + m} $ \\
		Mrongowius--R\"o{\ss}ler &&& \\
		& cut-off \ $ p(m,h, \bar{\varepsilon}) $ & $ \frac{h\sqrt{m}}{\sqrt{12}\pi \bar{\varepsilon}} $ & $ \frac{hm\sqrt{m-1}}{\sqrt{12}\pi \bar{\varepsilon}} $ \\
		& cost \ $ c(m,h, \bar{\varepsilon}) $ & $ \frac{hm^{3/2}}{\sqrt{3}\pi \bar{\varepsilon}}+\frac{m^2+m}{2} $ & $ \frac{hm^2\sqrt{m-1}}{\sqrt{3}\pi \bar{\varepsilon}}+\frac{m^2+m}{2} $ \\
		& error \ $ \varepsilon(m,h,\bar{c}) $ & $ \frac{2}{\sqrt{3}\pi}\cdot\frac{h\cdot m^{3/2}}{2 \bar{c} - m^2 - m} $ & $ \frac{2}{\sqrt{3}\pi}\cdot\frac{h\cdot m^2\sqrt{m-1}}{2 \bar{c} - m^2 - m} $ \\
		\bottomrule
	\end{tabular}
	\caption{Comparison of theoretical properties of
		the algorithms~\ref{alg:fourier}-\ref{alg:mronroe} in
		Section~\ref{sec:algorithms}. The cut-off $p(m,h,\bar{\varepsilon})$
		describes how to choose the truncation parameter $p$ in order to
		guarantee that the error in the corresponding norm is
		less than $\bar{\varepsilon}$. The cost $c(m,h, \bar{\varepsilon})$ is the number of
		random numbers that need to be generated to achieve
		this error. Finally, the error $\varepsilon(m,h,\bar{c})$
		is the minimal achievable error given a fixed cost budget
		$\bar{c}$. Note that $m$ and $h$ are the dimension of
		the Wiener process and the step size, respectively.}
	\label{tab:theoreticalcomparison}
\end{table}

Now we can calculate for concrete situations the cost
of each algorithm and determine the cheapest one.
Consider again the example of a numerical scheme for
SDEs with $L^2$-convergence of order $\gamma=1$.
In this situation we have seen before that we have to
couple the error and the step size as
$\bar{\varepsilon} = \mathcal{O}(h^{3/2})$
(cf. \eqref{eq:integralerrorbound-matrix}).
Using this coupling the cost only depends on the dimension,
the step size and the chosen error norm.
In Figure~\ref{fig:optimal-algorithm} we choose
$\bar{\varepsilon} = h^{3/2}$ and we visualize
for two of the random matrix norms introduced in
Section~\ref{subsec:relation-matrix-norms} the
algorithm with minimal cost subject to dimension
and step size.

\begin{figure}%
	\centering
	\includegraphics{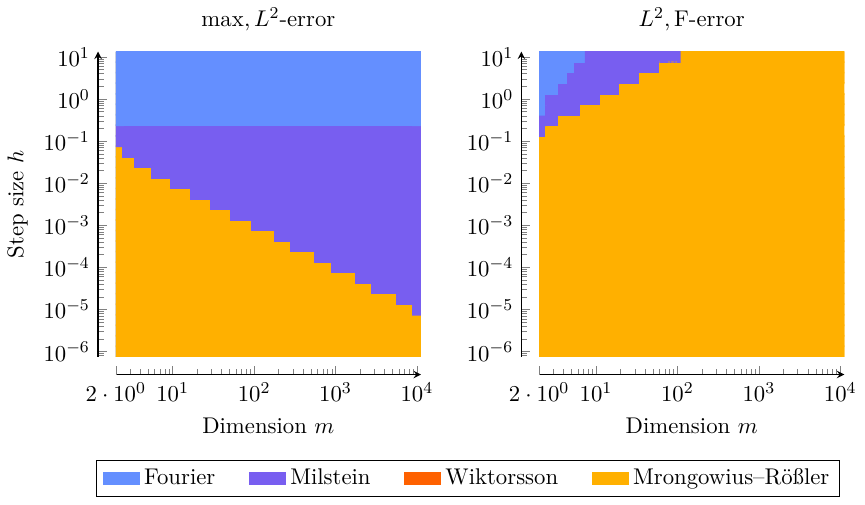}
	\caption{Optimal algorithm given the dimension $ m $
		and the step size $ h $ in the respective norm for
		$\bar{\varepsilon}=h^{3/2}$.}
	\label{fig:optimal-algorithm}
\end{figure}

The first thing we see is that the Wiktorsson algorithm
is never the best choice in these scenarios. This is because
for the Mrongowius--R\"o{\ss}ler algorithm the required value of the truncation
parameter can be reduced by a factor $ \sqrt{5} $ while only
requiring $ m $ additional random numbers.
Figure~\ref{fig:optimal-algorithm}
confirms that this trade-off is always worth it, i.e.,
the Mrongowius--R\"o{\ss}ler algorithm always
outperforms the Wiktorsson algorithm.

Moreover, for the \maxL-error we see in the left plot of
Figure~\ref{fig:optimal-algorithm} that if we fix some step size
then there always exists some sufficiently high dimension
such that it will be more efficient to apply the
Milstein algorithm.
This is a consequence of the higher order dependency on the
dimension of the Mrongowius--R\"o{\ss}ler algorithm (and also for the
Wiktorsson algorithm) compared to the Milstein algorithm which
dominates the error for large enough dimension if the step size
is fixed.
This means the Milstein algorithm is the optimal choice in this setting.

However, for any fixed dimension of the Wiener process
(as it is the case for, e.g., SDEs) there
always exists some threshold step size such that for all step sizes
smaller than this threshold
the Mrongowius--R\"o{\ss}ler algorithm
is more efficient than the other algorithms. This is due to the
higher order of convergence in time of the
Mrongowius--R\"o{\ss}ler algorithm. Thus, as the
step size tends to zero or if the step size is sufficiently small,
the Mrongowius--R\"o{\ss}ler algorithm is the optimal choice.

The situation in the right plot of Figure~\ref{fig:optimal-algorithm}
for the \LF-error is
different, because the simple scaling factor due to
Lemma~\ref{lem:norm-aequivalenz} does not translate into a
simple scaling of the associated cost. Indeed it is no simple
scaling at all if the cost contains additive terms independent
of the error.
Instead, only the error-dependent part of the
cost is scaled by $ m^2-m $ for the Fourier and Milstein
algorithms and by $ \sqrt{m^2-m} $ for the Wiktorsson and
Mrongowius--R\"o{\ss}ler algorithms (cf.~Table~\ref{tab:theoreticalcomparison}).
This is due to the different exponents of the truncation
parameter in the corresponding error estimates.
As a result of this, for the \LF-error the
Mrongowius--R\"o{\ss}ler algorithm is the optimal choice
except for the particular case of a rather big step size
and a low dimension of the Wiener process.
\subsection{A study on the order of convergence}
\label{subsec:convergence}
Convergence plots are an appropriate tool to complement theoretical
convergence results with numerical simulations. Unfortunately,
studying the convergence of random variables can be
difficult, especially if the target (limit) random variable that has
to be approximated can not be simulated exactly because its distribution is
unknown.
One way out of this problem is to substitute the target random
variable by a highly accurate approximate random variable that
allows to draw realizations from its known distribution.

In the present simulation study, we substitute the target random variable
$\mathcal{I}(h)$ by the approximate random variable
$\mathcal{I}^{\mref,( p_\mref )}(h)
= \hat{\mathcal{I}}^{\FS,( p_{\mref} )}(h)$
based on the fundamental and accepted Fourier algorithm with
high accuracy due to some large value for the parameter~$p_{\mref}$.
Therefore, we refer to $\mathcal{I}^{\mref,( p_\mref )}(h)$ as the
reference random variable and let $\hat{\mathcal{I}}^{\mathrm{Alg},\pp}(h)$
denote the approximation based on one of the algorithms described in
Section~\ref{sec:algorithms}.
We focus on the \maxL-error and the
\LF-error of the approximation algorithms that we want to compare.
We proceed as follows: 
Let $p_\mref \in \mathbb{N}$ be sufficiently large.
First, some realizations of $W_h$ and the reference random
variable $\mathcal{I}^{\mref,( p_\mref )}(h)$
are simulated.
Then, the corresponding realizations of the Fourier coefficients
$\alpha_r$ and $\beta_r$ for $1 \leq r \leq p_\mref$ are stored
together with $W_h$.
Next, the approximations
$\hat{\mathcal{I}}^{\mathrm{Alg},\pp}(h)$ are calculated
based on the same realization, which has to be done carefully.
The stored Fourier coefficients and the stored increment
of the Wiener process are used to compute
approximate solutions
using each of the algorithms with a truncation parameter $ p \ll p_\mref $.
Especially, we are able to exactly extract
from the stored Fourier coefficients and the stored increment of
the Wiener process
the corresponding realizations of the underlying standard Gaussian
random variables that are used in Wiktorsson's algorithm as well as
in the Mrongowius--R\"o{\ss}ler algorithm to approximate the tail.

To be precise, set $ h=1 $ and choose a large value $p_\mref$
for the reference random variable, for example $p_\mref=10^6$.
Now, simulate and store the Wiener increment $W_h$ as well
as the Fourier coefficients $\alpha_r^i$ and $\beta_r^i$
for $i=1,\ldots,m$ and $r=1,\ldots,p_{\mref}$. Then,
calculate the reference random variable
$\mathcal{I}^{\mathrm{ref},( p_\mref )}(h)$
using the Fourier algorithm.
Since every realization of the reference random variable
$\mathcal{I}^{\mathrm{ref}}(h)$ is also an approximation to
some realization of the random variable $\mathcal{I}(h)$,
we can control the precision in \maxL-norm of the reference
random variable exactly due to \eqref{Eq-max-L2-error-FS-exact}, i.e.,
it holds
\begin{align*}
	\big\| \mathcal{I}(h) - \mathcal{I}^{\mathrm{ref},( p_\mref )}(h)
	\big\|_{\maxL}  = \bigg( \frac{3 h^2}{2 \pi^2}
	\bigg( \frac{\pi^2}{6} - \sum_{r=1}^{p_\mref}
	\frac{1}{r^2} \bigg) \bigg)^{\frac{1}{2}} \, .
\end{align*}
For a large enough value of $p_\mref$ this error will be
sufficiently small such that it can be neglected in our
considerations. This justifies to work with the reference
random variable.

Next, choose a truncation value $p \ll p_\mref$ for the
approximation under consideration.
First, calculate $\hat{\mathcal{I}}^{\FS,(p)}(h)$
by the Fourier algorithm using only
the first $p$ stored Fourier coefficients. Then, extract
the necessary standard Gaussian vectors from the remaining
stored Fourier coefficients
using \eqref{eq:milstein-mu}, \eqref{eq:vec-R-p},
\eqref{eq:gamma-p-R-p} and \eqref{eq:RV-Psi-2-n-Exact}, i.e.,
calculate
\begin{align*}
    \gamma_1 &= \frac{1}{\sqrt{\psi_1(p+1)}}
    \sum_{r=p+1}^{p_\mref} \frac{1}{r} \,
    \alpha_r \,, \\
    \gamma &= \frac{1}{\sqrt{2\psi_1(p+1)}}
    \Big( \Sigma^\pp \Big)^{-1/2} K_m(P_m-I_{m^2})
    \sum_{r=p+1}^{p_\mref} \frac{1}{r} \Big( \alpha_r
    \otimes \big( \beta_r -\sqrt{2} W_h \big) \Big) \, , \\
    \gamma_2 &= \frac{1}{\sqrt{2\psi_1(p+1)}} \Big(
    \trunc{\Sigma_2} \Big)^{-1/2} K_m(P_m-I_{m^2})
    \sum_{r=p+1}^{p_\mref} \frac{1}{r}
    (\alpha_r \otimes \beta_r) \,.
\end{align*}
Calculating the vectors $\gamma_1$, $\gamma$ and $\gamma_2$
based on the already simulated and stored Fourier coefficients
guarantees that they fit correctly together with the realization
of the reference random variable. Note that for $r > p_\mref$
the Fourier coefficients $\alpha_r^i$ and $\beta_r^i$ for
$i=1, \ldots, m$ of the reference random variable are all zero.
Now, we calculate $\hat{\mathcal{I}}^{\Mil,(p)}(h)$
by the Milstein, $\hat{\mathcal{I}}^{\Wik,(p)}(h)$ by the Wiktorsson
and $\hat{\mathcal{I}}^{\MR,(p)}(h)$ by the Mrongowius--Rößler
algorithm using only the first $ p $ Fourier coefficients as well
as $ \gamma_1 $, $ \gamma $ and $ \gamma_2 $, respectively.

In Figure~\ref{fig:error-vs-effort} the computed
\maxL-errors are plotted versus the costs. Here, the
cost is determined as the number of random numbers necessary
to calculate the corresponding approximation for the
iterated stochastic integral for each algorithm. 
To generate the random numbers we use the Mersenne Twister generator
from Julia version 1.6.
The
steeper slope of the Wiktorsson and Mrongowius--R\"o{\ss}ler
algorithms can be clearly seen. This confirms the higher
order of convergence of these algorithms in contrast to
the Fourier and Milstein algorithms.
Furthermore, it is interesting to observe that the
Wiktorsson as well as the Mrongowius--R\"o{\ss}ler algorithm
perform better than their theoretical upper error bound given in
Theorem~\ref{thm:approx-error-estimates}.
This gives reason to suspect that the error estimates for both
algorithms given in
Theorem~\ref{thm:approx-error-estimates} are not sharp.

\begin{figure}[htb]
	\centering
	\includegraphics{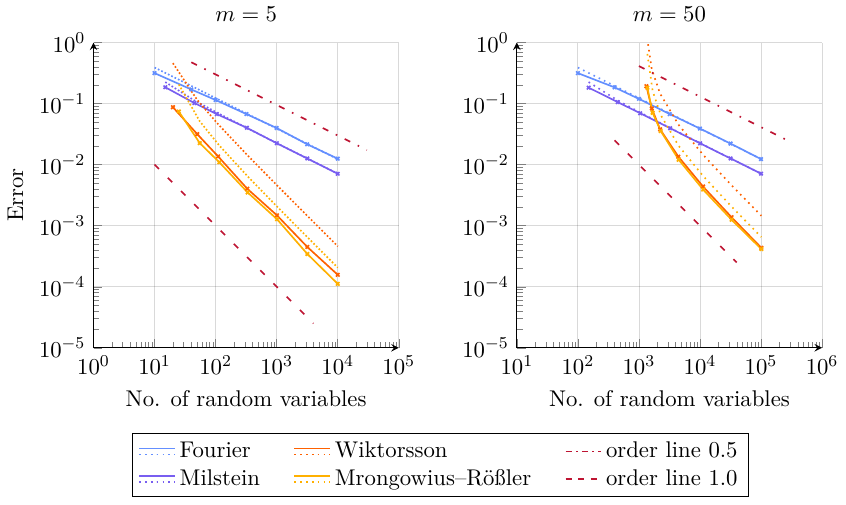}
	\caption{\maxL-error versus the number of random variables (cost) used for
		different dimensions of the Wiener process based on 100 realizations. The
		error is computed w.r.t.\ the reference random variable with
		$p_\mref = 10^6$ and for $ h=1 $. The dotted lines show the corresponding
        theoretical error bounds according to Theorem~\ref{thm:approx-error-estimates}.%
	}
	\label{fig:error-vs-effort}
\end{figure}
\section{A simulation toolbox for Julia and MATLAB}
\label{sec:software}
There exist several software packages to simulate SDEs
in various programming languages. A short search
brought up the following 20 toolboxes:
for the C++ programming language \cite{Avramidis2020,Janicki2003},
for the Julia programming language
\cite{Shardlow2016,Schauer2021,Akerlindh2019,StochasticDiffEq2021},
for Mathematica \cite{WolframItoProcess},
for \MATLAB{}
\cite{Picchini2017,Horchler2013,Gilsing2007,MatlabFinancial},
for the Python programming language
\cite{Ansmann2018,Aburn2021,Gevorkyan2016,Gevorkyan2018,torchsde2021}
and for the R programming language
\cite{Iacus2008,Brouste2014,Guidoum2020,Iacus2018}.
However, only four of these toolboxes seem to contain an implementation
of an approximation for the iterated stochastic integrals using
Wiktorsson's algorithm and none of them provide an implementation of
the recently proposed Mrongowius--R\"o{\ss}ler algorithm.
To be precise, Wiktorsson's algorithm is contained in the software
packages \package{SDELab}~\cite{Gilsing2007} in Matlab as well as in
its Julia continuation \package{SDELab2.jl}~\cite{Shardlow2016}, in
the Python package \package{sdeint}~\cite{Aburn2021} and more recently
also in the Julia package \package{StochasticDiffEq.jl}~\cite{StochasticDiffEq2021}.

Note that among these four software packages only three have the
source code readily available on GitHub:
\begin{itemize}
    \item The software package \package{SDELab2.jl} does not seem to be
    maintained any more. It is worth mentioning that its implementation
    avoids the explicit use of Kronecker products. However, it is not as
    efficient as the implementations discussed in
    Section~\ref{subsubsec:wiktorsson-implementation}. Moreover, some simple
    test revealed that the implementation seems to suffer from some typos
    that may cause inexact results. Additionally, the current version of this
    package does not run with the current Julia version~1.6.
    \item The software package \package{sdeint} is purely written in Python
    and as such it is inherently slower than comparable implementations in
    a compiled language. Furthermore, it is based on a rather na\"ive
    implementation, e.g., explicitly constructing the Kronecker products
    and the permutation and selection matrices as described in
    Section~\ref{subsubsec:wiktorsson-derivation}. However, this is
    computational costly and thus rather inefficient, see also the discussion
    in Section~\ref{subsubsec:wiktorsson-implementation}.
    \item The software package \package{StochasticDiffEq.jl} offers the
    latest implementation of Wiktorsson's approximation in the high-performance
    language Julia. Up to now, it is actively maintained and has a strong
    focus on good performance. It is basically a slightly optimized
    version of the \package{SDELab2.jl} implementation and thus it also
    does not achieve the maximal possible efficiency, especially compared
    to the discussion in Section~\ref{subsubsec:wiktorsson-implementation}.
\end{itemize}

In summary, there is currently no package providing an efficient algorithm
for the simulation of iterated integrals. That is the main reason we
provide a new simulation toolbox for Julia and \MATLAB{} that, among others,
features Wiktorsson's algorithm as well as the Mrongowius--R\"o{\ss}ler algorithm.
In Section~\ref{subsec:features} we present some more features of the toolbox.
After giving usage examples in Section~\ref{subsec:examples}, we analyse
the performance of our toolbox as compared to some existing implementations
in Section~\ref{subsec:benchmark}.
\subsection{Features of the new Julia and MATLAB simulation toolbox}
\label{subsec:features}
We introduce a new simulation toolbox for the simulation of twofold iterated
stochastic integrals and the corresponding L\'{e}vy areas for the Julia, see also \cite{MR3605826}, and
\MATLAB{} programming languages.
The aim of the toolbox is to provide both high performance and ease of use.
On the one hand, experts can directly control each part of the algorithms.
On the other hand, our software can automatically choose omitted parameters.
Thus, a non-expert user only has to provide the Wiener increment and
the associated step size and all other parameters will be chosen in an
optimal way.

The toolbox is available
as the software packages \package{LevyArea.jl} and \package{LevyArea.m} for Julia
and \MATLAB, respectively. Both software packages are freely available 
from Netlib\footnote{\url{http://www.netlib.org/numeralgo/}} as the \texttt{na57} package
and from
GitHub \cite{levyarea-jl-zenodo,levyarea-m-zenodo}. Additionally, the Julia package is registered in the "General"
registry of Julia and can be installed using the built-in package manager,
while the \MATLAB\ package is listed on the "\MATLAB\ Central - File Exchange"
and can be installed using the Add-On Explorer.
This allows for an easy integration of these packages into other
software projects.

Both packages provide the Fourier algorithm, the Milstein algorithm, the Wiktorsson
algorithm and, to the best of our knowledge, for the first time the
Mrongowius--R\"o{\ss}ler algorithm.
As the most important feature, all four algorithms are implemented following
the insights from Section~\ref{sec:algorithms} to provide fast and highly
efficient implementations.
In Section~\ref{subsec:benchmark} the performance of the Julia
package is analysed in comparison to existing software.

Additional features include the ability to automatically choose the
optimal algorithm based on the costs listed in
Table~\ref{tab:theoreticalcomparison}.
That is, given the increment of the driving Wiener process,
the step size and an error bound in some norm, both software packages
will automatically determine the optimal algorithm and the associated
optimal value for the truncation parameter.
Recall that optimal always means in terms of the number of
random numbers that have to be generated in order to obtain minimal
computing time.
Thus, using the option for an optimal choice of the algorithm
results in the best possible performance for each setting.
Especially, for the simulation of SDE solutions with some strong
order $ 1 $ numerical scheme, both software packages can determine
all necessary parameters by passing only the Wiener increment and
the step size to the toolbox.
Everything else is determined automatically such that the global
order of convergence of the numerical integrator is preserved.

Furthermore, it is possible to directly simulate iterated stochastic integrals
based on $Q$-Wiener processes on finite-dimensional spaces as they typically
appear for the approximate simulation of solutions to SPDEs. In that case, the
eigenvalues of the covariance operator $Q$ need to be passed
to the software toolbox in order to compute the correctly scaled iterated
stochastic integrals. This scaling is briefly described at the
end of Section~\ref{subsec:iterint-levyarea}, see also
\cite{MR3949104} for a detailed discussion.
\subsection{Usage of the software package}
\label{subsec:examples}
Next, we give some basic examples how to make use of the two software packages
\package{LevyArea.jl} for Julia and \package{LevyArea.m} for \MATLAB.
The aim is not to give a full documentation but rather to show some
example invocations of the main functions.
For more information we refer to the documentation that comes with
the software and can be easily accessed in Julia and \MATLAB.
\subsubsection{The Julia package \package{LevyArea.jl}}
\label{subsubsec:examples-julia}
The following code works with Julia version~1.6.
First of all, the software package \package{LevyArea.jl} \cite{levyarea-jl-zenodo} needs to
be installed to make it available. Therefore, start Julia and
enter the package manager by typing \jlinl{]} (a closing square bracket).
Then execute

{\Large\begin{jllisting}
pkg> add LevyArea
\end{jllisting}}
\noindent
which downloads the package and adds it to the current project.
After the installation of the package, one can load the package
and initialize some variables that we use in the following:

{\Large\begin{jllisting}
julia> using LevyArea
julia> m = 5; # dimension of Wiener process
julia> h = 0.01; # step size or length of time interval
julia> err = 0.05; # error bound
julia> W = sqrt(h) * randn(m); # increment of Wiener process
\end{jllisting}}

\noindent
Here, $W$ is the $m$-dimensional vector of increments of the driving
Wiener process on some time interval of length $h$.

For the simulation of the corresponding twofold iterated stochastic
integrals, one can use the following default call of the function
\jlinl{iterated_integrals} where only the increment and the
step size are mandatory:

{\Large\begin{jllisting}
julia> II = iterated_integrals(W,h)
\end{jllisting}}

\noindent
In this example, the error $\bar{\varepsilon}$ is not explicitly specified.
Therefore, the
function assumes the desired precision to be $\bar{\varepsilon}=h^{3/2}$
as it has to be chosen for the numerical solution of SDEs,
see end of Section~\ref{sec:error-estimates},
and automatically chooses the optimal algorithm according to the logic in
Section~\ref{subsec:costs}, see also Figure~\ref{fig:optimal-algorithm}.
If not stated otherwise, the default error criterion is the \maxL-error
and the function returns the $m \times m$ matrix \jlinl{II} containing a realization
of the approximate iterated stochastic integrals that correspond
to the given increment $W$.

The desired precision $\bar{\varepsilon}$ can be optionally provided
using a third positional argument:

{\Large\begin{jllisting}
julia> II = iterated_integrals(W,h,err)
\end{jllisting}}

\noindent
Again, the software package automatically chooses the optimal
algorithm as analysed in Section~\ref{subsec:costs}.

To determine which algorithm is chosen by the package without simulating any iterated
stochastic integrals yet, the function \jlinl{optimal_algorithm} can
be used. The arguments to this function are the dimension of the Wiener
process, the step size and the desired precision:

{\Large\begin{jllisting}
julia> alg = optimal_algorithm(m,h,err); # output: Fourier()
\end{jllisting}}

It is also possible to choose the algorithm directly using the
keyword argument \jlinl{alg}. The value can be one of
\jlinl{Fourier()}, \jlinl{Milstein()}, \jlinl{Wiktorsson()} and \jlinl{MronRoe()}:

{\Large\begin{jllisting}
julia> II = iterated_integrals(W,h; alg=Milstein())
\end{jllisting}}

As the norm for the considered error, e.g., the \maxL- and \LF-norm
can be selected using a keyword argument. The accepted
values are \jlinl{MaxL2()} and \jlinl{FrobeniusL2()}:

{\Large\begin{jllisting}
julia> II = iterated_integrals(W,h,err; error_norm=FrobeniusL2())
\end{jllisting}}

If iterated stochastic integrals for some $Q$-Wiener process need to
be simulated, like for the numerical simulation of solutions to SPDEs,
then the increment of the $Q$-Wiener process together with the
square roots of the eigenvalues of the associated covariance
operator have to be provided, see Section~\ref{subsec:iterint-levyarea}:

{\Large\begin{jllisting}
julia> q = [1/k^2 for k=1:m]; # eigenvalues of cov. operator
julia> QW = sqrt(h) * sqrt.(q) .* randn(m); # Q-Wiener increment
julia> IIQ = iterated_integrals(QW,sqrt.(q),h,err)
\end{jllisting}}

\noindent
In this case, the function \jlinl{iterated_integrals} utilizes a
scaling of the iterated stochastic integrals as explained in
Section~\ref{subsec:iterint-levyarea} and also adjusts the error
estimates appropriately such that the error bound holds w.r.t.\ the
iterated stochastic integrals $\mathcal{I}^{Q}(h)$ based on the
$Q$-Wiener process. Here the error norm defaults to the \LF-error.

Note that all discussed keyword arguments are optional and can be
combined as favoured. Additional information can be found using the
Julia help mode:

{\Large\begin{jllisting}
julia> ?iterated_integrals
julia> ?optimal_algorithm
\end{jllisting}}

\subsubsection{The MATLAB package \package{LevyArea.m}}
\label{subsubsec:examples-matlab}
The following code works with \MATLAB\ version~2020a.
The installation of the software package \package{LevyArea.m} \cite{levyarea-m-zenodo} in
\MATLAB\ is done either by copying the package folder \texttt{+levyarea}
into the current working directory or by installing the \MATLAB\
toolbox file \texttt{LevyArea.mltbx}.
This can also be done through the Add-On Explorer.

The main function of the toolbox is the function
\jlinl{iterated_integrals}. It can be called by prepending the
package name \jlinl{levyarea.iterated_integrals}. However, since
this may be cumbersome one can import the used functions once by

{\Large\begin{jllisting}
>> import levyarea.iterated_integrals
>> import levyarea.optimal_algorithm
\end{jllisting}}

\noindent
and then one can omit the package name by simply calling
\jlinl{iterated_integrals} or \jlinl{optimal_algorithm},
respectively.
In the following, we assume that the two functions are imported, so that we can always call them directly without the package name.

For the examples considered next we initialize
some auxiliary variables:

{\Large\begin{jllisting}
>> m = 5;	
>> h = 0.01;	
>> err = 0.05;	
>> W = sqrt(h) * randn(m,1);	
\end{jllisting}}

\noindent
Note that $W$ denotes the $m$-dimensional vector of increments of the
Wiener process on some time interval of length $h$.

For directly simulating the twofold iterated stochastic integrals
given the increment of the Wiener process, one can call the main
function \jlinl{iterated_integrals} passing the increment and
the step size:

{\Large\begin{jllisting}
>> II = iterated_integrals(W,h)
\end{jllisting}}

\noindent
These two parameters are mandatory. In this case, the precision
is set to the default $\bar{\varepsilon}=h^{3/2}$ and the optimal
algorithm is applied automatically according to Section~\ref{subsec:costs} (see also Figure~\ref{fig:optimal-algorithm}). The default norm for the
error is set to be the \maxL-norm.

The desired precision $\bar{\varepsilon}$ can be optionally provided using a third positional argument:

{\Large\begin{jllisting}
>> II = iterated_integrals(W,h,err)
\end{jllisting}}

\noindent
Here, again the software package automatically chooses the
optimal algorithm as analysed in Section~\ref{subsec:costs}.

In order to determine which algorithm is optimal for some
given parameters without simulating the iterated stochastic integrals
yet, the function \jlinl{optimal_algorithm} can be used:

{\Large\begin{jllisting}
>> alg = optimal_algorithm(m,h,err); 
\end{jllisting}}

\noindent
The arguments to this function are the dimension of the Wiener
process, the step size and the desired precision.

On the other hand, it is also possible to choose the used
algorithm directly using a key-value pair. The value can be
one of \jlinl{'Fourier'}, \jlinl{'Milstein'}, \jlinl{'Wiktorsson'}
and \jlinl{'MronRoe'}. E.g., to use the Milstein algorithm call:

{\Large\begin{jllisting}
>> II = iterated_integrals(W,h,'Algorithm','Milstein')
\end{jllisting}}

The desired norm for the prescribed error bound can also be
selected using a key-value pair. The accepted values are
\jlinl{'MaxL2'} and \jlinl{'FrobeniusL2'} for the \maxL- and
\LF-norm, respectively. E.g., in order to use the \LF-norm call:

{\Large\begin{jllisting}
>> II = iterated_integrals(W,h,err,'ErrorNorm','FrobeniusL2')
\end{jllisting}}

The simulation of numerical solutions to SPDEs often requires
iterated stochastic integrals based on $Q$-Wiener processes.
In that case, the square roots of the eigenvalues of the associated
covariance operator need to be provided. Therefore, first define
all necessary variables and then call the function
\jlinl{iterated_integrals} using the key \jlinl{'QWiener'}
as follows:

{\Large\begin{jllisting}
>> q = 1./(1:m)'.^2; 
>> QW = sqrt(h) * sqrt(q) .* randn(m,1); 
>> IIQ = iterated_integrals(QW,h,err,'QWiener',sqrt(q))
\end{jllisting}}

\noindent
In this case, the function utilizes the scaling of the iterated
stochastic integrals as explained in Section~\ref{subsec:iterint-levyarea}
and it also adjusts the error estimates w.r.t.\ $\mathcal{I}^Q(h)$
appropriately.

Note that all discussed keyword arguments (key-value pairs) are
optional and can be combined as desired. Additional information
can be found using the \jlinl{help} function:

{\Large\begin{jllisting}
>> help iterated_integrals
>> help levyarea.optimal_algorithm
\end{jllisting}}

\subsection{Benchmark comparison}
\label{subsec:benchmark}
To assess the performance of the new software package, we
compare it with some existing implementations. Therefore,
we consider the software packages \package{SDELab2.jl} \cite{Shardlow2016},
\package{sdeint} \cite{Aburn2021} and \package{StochasticDiffEq.jl} \cite{StochasticDiffEq2021}.
First, the software package \package{sdeint} is completely
written in Python and a quick simulation example with $m=50$,
$h=0.01$, $\varepsilon=0.001$ and thus $p=15$ takes $9.9$ seconds
to generate the matrix of iterated integrals with Wiktorsson's
algorithm. For the same parameters, our implementation of
Wiktorsson's algorithm in the package \package{LevyArea.jl}
completed in only $ 5.5\cdot 10^{-6} $ seconds. This is a
speed-up by a factor $1.8 \times 10^6$ and therefore we exclude the
Python package \package{sdeint} from our further comparison.
Moreover, since the package \package{SDELab2.jl} seems to be unmaintained
and does not run unmodified on current Julia versions, it is excluded
from our benchmark as well. Thus, we compare the new package
\package{LevyArea.jl} with the implementation in the
package \package{StochasticDiffEq.jl} in the following.

\begin{figure}[p]
    \centering
    \includegraphics{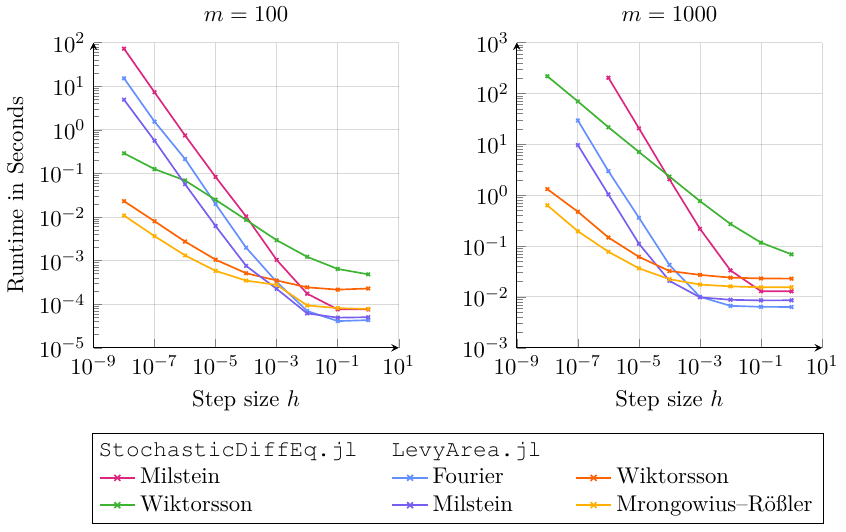}
    \caption{Runtime of the algorithms for variable step size and two different dimensions of the underlying Wiener process.
        The value of the truncation parameter is always chosen to guarantee that $ \lVert\mathcal{I}-\hat{\mathcal{I}}\rVert_\maxL \leq h^{3/2} $ according to the error estimates in Theorem~\ref{thm:approx-error-estimates}.
        The timings are averaged over 100 repetitions.
    }
    \label{fig:runtime-stepsize}
\end{figure}
\begin{table}[p]
    \renewcommand{\arraystretch}{1.3}
    \centering
    \begin{tabular}{@{}llll@{}}
        \toprule
        Algorithm \texttt{A} from & Algorithm \texttt{B} from & \multicolumn{2}{c}{Relative speed-up ($ \frac{t_\text{\texttt{A}}}{t_\text{\texttt{B}}} $)} \\
        \cmidrule{3-4}
        \package{StochasticDiffEq.jl} & \package{LevyArea.jl} & $ m=100 $ & $ m=1000 $ \\
        \midrule
        Milstein & Milstein & $ 1.5\times$--$14.8\times $ & $ 1.5\times$--$196.8\times $ \\
        Wiktorsson & Wiktorsson & $ 2.1\times$--$25.1\times $ & $ 3.0\times$--$165.8\times $ \\
        Wiktorsson & Mrongowius--Rößler & $ 6.2\times$--$52.1\times $ & $ 4.4\times$--$356.1\times $ \\
        \bottomrule
    \end{tabular}
    \caption{Minimal and maximal relative speed-ups of the new implementations in \package{LevyArea.jl} with respect to the implementations in \package{StochasticDiffEq.jl} for two different dimensions of the underlying Wiener process. See also Figure~\ref{fig:runtime-stepsize}.}
    \label{tab:speed-ups}
\end{table}

For the benchmark we measure the time
it takes to generate
the full matrix of iterated stochastic integrals over a range
of step sizes and for two different dimensions of the driving
Wiener process. To deal with measurement noise, we average
the computing times over 100 runs. Further, in order to
guarantee a fair comparison, we calculate the value of the truncation parameter
according to Table~\ref{tab:theoreticalcomparison} for all
algorithms under consideration.
We consider the setting where a strong order $ 1 $
numerical scheme is applied for the simulation of solutions
to SDEs. This setting is of high importance for many applications
and a typical situation where iterated stochastic integrals need
to be efficiently simulated. In order to retain the strong order $ 1 $
if the twofold iterated stochastic integrals are replaced by their
approximations, we need to
choose the precision for the simulated iterated stochastic
integrals following the discussion at the end of
Section~\ref{sec:error-estimates}.
Therefore, the error is always
chosen as $\bar{\varepsilon} = h^{3/2}$ in the \maxL-norm
and we consider step sizes $10^0, 10^{-1}, \ldots,
10^{-8}$ for the case $m=100$. For the case of $m=1000$, we
initially use the same step sizes, however we
refrain from applying the smaller step sizes for the Fourier
and Milstein algorithms whenever the computing time
or the needed memory becomes too large for reasonable
computations.

The Julia package \package{StochasticDiffEq.jl} is used
at version~6.37.1 and we run the in-place function
\jlinl{StochasticDiffEq.get_iterated_I!(h,W,nothing,iip,p)}
where \jlinl{iip} is a preallocated buffer.
Note that the creation of this buffer is not included in our
computing time measurements. The newly proposed Julia
package \package{LevyArea.jl} is used at its current
version~1.0.0.
The benchmark is performed on a computer with
an Intel Xeon E3-1245~v5 CPU at 3.50~GHz and 32~GB of memory
using Julia version~1.6.
Furthermore, the Julia package \package{DrWatson.jl} \cite{Datseris2020} is employed.
The simulation results for both settings with $m=100$ and
$m=1000$ are shown in Figure~\ref{fig:runtime-stepsize}.

Considering the benchmark results in
Figure~\ref{fig:runtime-stepsize}, we can see that
Wiktorsson's algorithm as well as the Mrongowius--R\"o{\ss}ler
algorithm attain a higher order of convergence compared
to the Fourier and the Milstein algorithms. This confirms the
theoretical results in Theorem~\ref{thm:approx-error-estimates}.
Moreover, the simulation results show that for the
Milstein algorithm as well as for Wiktorsson's algorithm
the implementation in the newly proposed package \package{LevyArea.jl}
(blue, purple, orange and yellow colours)
clearly outperforms the implementations in the package
\package{StochasticDiffEq.jl} (red and green colours). This is due to
the ideas for an efficient implementation presented in
Section~\ref{sec:algorithms} that are incorporated
in the package \package{LevyArea.jl}. This allows for a
speed-up by factors up to 165.8 for Wiktorsson's algorithm
in the case of $m=1000$ for the range of parameters we tested.

Comparing the cases $m=100$ and $m=1000$, it seems that
the implementations in the package \package{StochasticDiffEq.jl}
have a much higher overhead for high-dimensional Wiener processes.

Moreover, it can be seen that for both settings
the Mrongowius--R\"o{\ss}ler algorithm is the best algorithm
for sufficiently small step sizes as they typically
arise for SDE and SPDE approximation problems. This confirms
the theoretical results presented in Figure~\ref{fig:optimal-algorithm}.

The relative speed-up of the implementations in package
\package{LevyArea.jl} compared to package
\package{StochasticDiffEq.jl} are specified in
Table~\ref{tab:speed-ups}. There is a serious speed-up
for each algorithm in the package \package{LevyArea.jl} that
becomes even greater when step sizes are getting smaller or if
the dimension of the Wiener process is rather high. Thus, the
proposed software package \package{LevyArea.jl} allows for
very efficient simulations of iterated stochastic integrals
with very good performance also for small step sizes and
small error bounds as well as for high dimensional Wiener
processes.
This makes the package valuable especially for SDE and SPDE
simulations based on higher order approximation schemes.

{\emergencystretch=1em
\printbibliography[heading=bibintoc]
}

\end{document}